\newtheorem{theorem}{Theorem}
\newtheorem{lemma}[theorem]{Lemma}
\theoremstyle{remark}
\newtheorem{remark}[theorem]{Remark}
\newcommand{\RR}{\mathbb{R}}
\newcommand{\ZZ}{\mathbb{Z}}
\newcommand{\NN}{\mathbb{N}}
\newcommand{\TT}{\mathbb{T}}
\newcommand{\CC}{\mathbb{C}}
\newcommand{\II}{\mathbb{I}}
\newcommand{\pa}{\partial}
\newcommand{\cT}{\mathcal{T}}
\newcommand{\cL}{\mathcal{L}}
\newcommand{\cF}{\mathcal{F}}
\newcommand{\cX}{\mathcal{X}}
\newcommand{\bu}{\mathbf{u}}
\newcommand{\buzero}{\mathbf{u}_0}
\newcommand{\bv}{\mathbf{v}}
\newcommand{\bk}{\mathbf{k}}
\newcommand{\tbk}{\widetilde{\mathbf{k}}}
\newcommand{\bx}{\mathbf{x}}
\newcommand{\by}{\mathbf{y}}
\newcommand{\un}{\mathbf{u}^{(n)}}
\newcommand{\vn}{\mathbf{v}^{(n)}}
\newcommand{\Un}{\mathbf{U}^{(n)}}
\newcommand{\Vn}{\mathbf{V}^{(n)}}
\newcommand{\bU}{\mathbf{U}}
\newcommand{\bV}{\mathbf{V}}
\newcommand{\balpha}{\boldsymbol{\alpha}}
\newcommand{\rL}{\mathring{L}}
\newcommand{\what}{\widehat}
\newtheorem{proposition}[theorem]{Proposition}
\begin{document}

\title[The 2D Kuramoto-Sivashinsky Equation]{Global existence and analyticity
%and Existence %of Small Solutions
for the 2D Kuramoto-Sivashinsky Equation}
\author{David M. Ambrose}
\address{Department of Mathematics, Drexel University, Philadelphia, PA 19104,
USA}
\email{dma68@drexel.edu}
\author{Anna L. Mazzucato}
\address{Department of Mathematics, Penn State University, University Park, PA
16802, USA}
\email{alm24@psu.edu}

\dedicatory{In loving memory of George R. Sell.}

\begin{abstract} There is little analytical theory for the behavior of solutions of the Kuramoto-Sivashinsky equation in two
spatial dimensions over long times.  We study the case in which the spatial domain is a two-dimensional torus.
In this case, the linearized behavior depends on the size of the torus -- in particular, for different sizes of the domain, there
are different numbers of linearly growing modes.  We prove that small solutions exist for all time if there are no linearly
growing modes, proving also in this case that the radius of analyticity of solutions grows linearly in time.
In the general case (i.e., in the presence of a finite number of
growing modes), we make estimates for how the radius of analyticity of solutions changes in time.
\end{abstract}

\keywords{Two dimension, Kuramoto-Sivashinsky, radius of analyticity, global
existence, mild solutions, Wiener algebra}

\subjclass[2010]{35K25, 35K58, 35B65, 35B10}

\maketitle

\section{Introduction}

In $n$ spatial dimensions, the Kuramoto-Sivashinsky equation is
\begin{equation}\label{phiEquation}
\phi_{t}+\frac{1}{2}|\nabla\phi|^{2}=-\Delta^{2}\phi-\Delta\phi.
\end{equation}
In the case $n=1,$ we introduce $u=\phi_{x}$ and differentiate \eqref{phiEquation}, finding the equation
\begin{equation}\label{1DKS}
u_{t}+uu_{x}=-\Delta^{2}u-\Delta u.
\end{equation}
The differentiated form \eqref{1DKS} is also referred to as the
Kuramoto-Sivashinsky equation.
Solutions of the initial value problem for \eqref{1DKS} have been shown to exist
for all times \cite{tadmor}, and
stability of $u=0$ has also been demonstrated \cite{goodman}, \cite{NST}.

A fundamental difficulty in the study of the Kuramoto-Sivashinsky equation is the lack of a maximum principle,
because of the presence of the fourth derivative term on the right-hand side of \eqref{phiEquation} or \eqref{1DKS}.
In the one-dimensional case, however, \eqref{1DKS} allows conservation of the $L^{2}$ norm, and this has proved quite
useful in previous studies.  In two spatial dimension, this conservation property is not present, and much less progress
has been made.

We may state the 2D Kuramoto-Sivashinksy equation in its differentiated form, letting $u=\phi_{x}$ and $v=\phi_{y}.$
This leads to the following system, which we call 2DKS in the sequel:
\begin{equation}\label{2DKSu}
u_{t}+uu_{x}+vv_{x}=-\Delta^{2}u-\Delta u,
\end{equation}
\begin{equation}\label{2DKSv}
v_{t}+uu_{y}+vv_{y}=-\Delta^{2}v-\Delta v,
\end{equation}
\begin{equation}
u_{y}=v_{x}.
\end{equation}
We supplement this with initial conditions:
\begin{equation}\label{IC}
u(\cdot,0)=u_{0},\qquad v(\cdot,0)=v_{0}.
\end{equation}
%%% anna's addition %%%
It is easy to see that if the initial data is a gradient, then so is the
solution, at least for strong solutions, but this condition is not always needed in
the analysis.
%%%%%%%%%%%%%%
We will sometimes write the equations for $(u,v)$ as above in \eqref{2DKSu}, \eqref{2DKSv}, but we may also
write them in a vector form.  We let $\mathbf{u}=(u,v),$ and then we have the following equivalent version of the
evolution equations:
\begin{equation}\nonumber
\mathbf{u}_{t}+\mathbf{u}\cdot\nabla\mathbf{u}=-\Delta^{2}\mathbf{u}
-\Delta\mathbf{u},
\end{equation}
where we have used the fact that $\bu$ is a curl free to write the
nonlinearity as $\, (\nabla |\bu|^2)/2 = \bu \cdot \nabla \bu$.

In two dimensions, Molinet considered a modification of the
Kuramoto-Sivashinsky equation, known as the
Burgers-Sivashinsky model (Burgers-Sivashinsky was also considered by Goodman \cite{goodman}).
Unlike Kuramoto-Sivashinsky, the Burgers-Sivashinsky model admits a maximum principle, and Molinet leverages this
to find global existence of small solutions \cite{molinet}.  For 2DKS, Sell and Taboada proved global existence of solutions
in the case of a thin domain \cite{sellTaboada}; more recent work on 2DKS in thin domains is \cite{newThinResults},
\cite{molinet2}.

In the present work, we provide global existence theorems for small data in the
case of a domain that is not thin, but that satisfies a size condition (the
periods must be less than $2\pi$); as long as the domain admits no growing
mode for the linear part of the evolution, we prove that for initial data
small either in the Wiener algebra or in $L^{2},$ the solution exists for all
time.  For the result in the Wiener algebra,
an automatic consequence is that solutions are analytic at all positive times, with the radius of analyticity growing linearly
in time.    In the one-dimensional case, significant work has been done on tracking how the behavior of solutions depends
upon the size of the domain, such as by Giacomelli and Otto \cite{giacomelliOtto}; we do not provide such a detailed
description of dependence on domain size, but instead only draw the distinction as to whether linearly growing
modes are present or not.

In the general case (not restricting the size of the domain, and thus allowing linearly growing modes to be present),
we again prove results both for data in the Wiener algebra and in $L^{2}.$  For general $L^{2}$ data (not necessarily small)
in such a general domain, we prove that solutions become analytic at positive
times, as long as they exists, and provide a lower bound
on the size of the radius of analyticity.  Our bound initially grows like
$t^{1/4},$ and then decays exponentially on the interval of time where the
solution exists. In this paper, we do not directly exploit a Fourier
representation of the solution, as in the seminal work of Foias and
Temam on the Navier-Stokes equation \cite{FT89}. Rather, we adapt the approach
of Gruji\'c and Kukavica \cite{GK98}, which is
based on a suitable regularization of the equations, bounds on mild and strong
solutions, and a version of Montel's Theorem for normal families in several
complex variables. The advantage of this approach is that it is directly
applicable to treat analyticity with data in $L^p$, $p\ne 2$, though we do not
pursue existence in $L^p$ in this work.

There are several works in the literature using the Wiener algebra to explore
analyticity of solutions to non-linear PDEs, especially in the context of fluid
mechanics. Indeed, the radius of analyticity can be linked to the decay of the
power spectrum for the solution \cite{BJMT14}, which in turn gives rigorous
bounds on the turbulent dissipation scale \cite{DT95} The Wiener algebra
approach is especially useful to derive estimates on the growth of Gevrey norms
(we refer in particular to the work of Oliver and Titi \cite{OT00,OT01}).

The plan of the paper is as follows: in Section 2, we prove existence of small solutions for all time with data in the Wiener
algebra in the case that there are no linearly growing modes.  In Section 3, we treat the general case for data in the Wiener
algebra, proving short-time existence.  These results in the Wiener algebra automatically provide a lower bound on the
radius of analyticity of solutions.  In Section 4 we prove a short-time existence theorem for initial data in $L^{2},$ with
or without growing modes and for any size data.  In Section 5 we further establish that without growing modes and for small
$L^{2}$ data, these solutions exist for all time.  In Section 6, returning to the general case with $L^{2}$ data, we establish
a lower bound for the radius of analyticity.

\subsection*{Acknowledgments}
The authors thank Edriss Titi for helpful conversations.  The authors are also
grateful to the National Science Foundation
for support through NSF grants DMS-1515849 (to Ambrose) and DMS-1615457
(to Mazzucato).
The authors acknowledges the hospitality and support of the Institute for
Computational and Experimental Research in Mathematics (ICERM) during the
Semester Program on "Singularities and Waves In Incompressible Fluids", where
part of this work was discussed. ICERM receives major funding from NSF and Brown
University.

\section{No growing modes: A global existence theorem}  \label{s:wiener}

We prove a global existence theorem for small solutions of 2DKS; this uses function spaces based on the Wiener algebra,
and is inspired by the proof of small vortex sheets for all time by Duchon and Robert \cite{duchonRobert}.  We note that
the first author has subsequently used these same and related ideas for various problems in the papers
\cite{ambroseMFG1}, \cite{ambroseMFG2}, and \cite{milgromAmbrose}.

Making a straightforward computation, we arrive at the Duhamel representation of the solution
to \eqref{2DKSu}, \eqref{2DKSv}, \eqref{IC}:
\begin{equation}\label{duhamel1}
u(\cdot,t)=e^{-t(\Delta^{2}+\Delta)}u_{0}
-\int_{0}^{t}e^{-(t-s)(\Delta^{2}+\Delta)}(uu_{x}+vv_{x})(\cdot,s)\ ds,
\end{equation}
\begin{equation}\label{duhamel2}
v(\cdot,t)=e^{-t(\Delta^{2}+\Delta)}v_{0}
-\int_{0}^{t}e^{-(t-s)(\Delta^{2}+\Delta)}(uu_{y}+vv_{y})(\cdot,s)\ ds.
\end{equation}
We thus introduce an operator, $\mathcal{T},$ given by the right-hand sides of these equations:
\begin{equation}\nonumber
\mathcal{T}(u,v)=\left(\begin{array}{c}
e^{-t(\Delta^{2}+\Delta)}u_{0}
-\int_{0}^{t}e^{-(t-s)(\Delta^{2}+\Delta)}(uu_{x}+vv_{x})(\cdot,s)\ ds\\
e^{-t(\Delta^{2}+\Delta)}v_{0}
-\int_{0}^{t}e^{-(t-s)(\Delta^{2}+\Delta)}(uu_{y}+vv_{y})(\cdot,s)\ ds
\end{array}\right).
\end{equation}
We will look for solutions of 2DKS by finding fixed points of the operator $\mathcal{T}.$

\subsection{Function spaces}

We consider the torus $\mathbb{T}^{2}$ with dimensions $[0,L_{1}]\times[0,L_{2}].$
On such a torus, a function $f$ can be expressed in terms of its Fourier series as follows:
\begin{equation}\nonumber
f(x)=\sum_{k\in\mathbb{Z}^{2}}\hat{f}(k)\exp\left\{2\pi i\left(
\frac{k_{1}x}{L_{1}}+\frac{k_{2}y}{L_{2}}\right)\right\}.
\end{equation}
Here, we have denote $k=(k_{1},k_{2})\in\mathbb{Z}^{2}.$  We have denoted the Fourier coefficients of $f$
as $\hat{f}(k),$ but in the sequel we may also denote them as $\mathcal{F}(f)(k).$
On this torus, we have the following symbols for the operators $\partial_{x}$ and $\partial_{y}:$
\begin{equation}\nonumber
\left(\mathcal{F}\partial_{x}\right)(k)=\frac{2\pi i k_{1}}{L_{1}},
\qquad
\left(\mathcal{F}\partial_{y}\right)(k)=\frac{2\pi i k_{2}}{L_{2}}.
\end{equation}
% The next piece may or may not be out of place here.
We mention also that the symbol of $\Delta^{2}+\Delta$ is given by
\begin{equation}\label{symbolOfLinearPart}
\left(\mathcal{F}(\Delta^{2}+\Delta)\right)(k)
=
\frac{16\pi^{4}k_{1}^{4}}{L_{1}^{4}}+\frac{16\pi^{4}k_{1}^{2}k_{2}^{2}}{L_{1}^{2}L_{2}^{2}}
+\frac{16\pi^{4}k_{2}^{4}}{L_{2}^{4}}-\frac{4\pi^{2}k_{1}^{2}}{L_{1}^{2}}-\frac{4\pi^{2}k_{2}^{2}}{L_{2}^{2}}.
\end{equation}
We introduce some notation: we let $\sigma(k)$ denote the right-hand side of \eqref{symbolOfLinearPart}.
In the current section, we are studying the case in which the linear operator of the Kuramoto-Sivashinsky equation,
which is $-\Delta^{2}-\Delta,$ yields no growing modes.  We see from \eqref{symbolOfLinearPart}
that this means we set the conditions $L_{1}\in(0,2\pi)$ and $L_{2}\in(0,2\pi).$  With these conditions satisfied,
we have $\left(\mathcal{F}(\Delta^{2}+\Delta)\right)(k)>0,$ for all $k\in\mathbb{Z}^{2}\setminus\{0\}.$

We introduce function spaces based on the Wiener algebra.
For any $\rho\geq0,$ we define $B_{\rho}$ to be a set of functions from $\mathbb{T}^{2}$
to $\mathbb{R}$ as follows:
\begin{equation}\nonumber
B_{\rho}=\left\{f: |f|_{\rho}<\infty\right\},
\end{equation}
with the norm defined by
\begin{equation}\nonumber
|f|_{\rho}=\sum_{k\in\mathbb{Z}^{2}}e^{\rho|k|}|\hat{f}(k)|.
\end{equation}
Note that if $\rho=0,$ then we are requiring $\hat{f}\in \ell^{1},$ and so the space $B_{0}$ is
exactly the Wiener algebra.

We next define a version of these spaces for functions which also depend on time.
Let $\alpha>0;$ define $\mathcal{B}_{\alpha}$ to be the set of functions continuous from $[0,\infty)$ to
$B_{0},$ such that for all such
$f:\mathbb{R}\times[0,\infty)\rightarrow\mathbb{R}$ we have
\begin{equation}\nonumber
\mathcal{B}_{\alpha}=\left\{f:\|f\|_{\alpha}<\infty\right\},
\end{equation}
with the norm defined by
\begin{equation}\nonumber
\|f\|_{\alpha}=\sum_{k\in\mathbb{Z}^{2}}\sup_{t\in[0,\infty)}\left(e^{\alpha t|k|}|\hat{f}(k,t)|\right)
\end{equation}

Notice that for any $\alpha>0,$ if $f\in\mathcal{B}_{\alpha},$ then for all $t>0,$ we have that
$f(\cdot,t)$ is analytic, but $f(\cdot,0)$ need not be analytic (it is only in the Wiener algebra).
Furthermore, it is well-known that the Wiener algebra is a Banach algebra (hence the name), and
the spaces $\mathcal{B}_{\alpha}$ inherit this property.  To see the algebra property, first note that
$B_{\rho}$ is a Banach algebra, and this can be seen from the following:
\begin{equation}\nonumber
\left| e^{\rho|k|}\widehat{fg}(k)\right|\leq\sum_{j}\left|e^{\rho |k-j|}\hat{f}(k-j)\right|
\left| e^{\rho |j|}\hat{g}(j)\right|.
\end{equation}
Summing in $k,$ we see that $|fg|_{\rho}\leq|f|_{\rho}|g|_{\rho}.$
The same considerations, and some elementary manipulations of the supremum, imply that
if $f\in\mathcal{B}_{\alpha}$ and $g\in\mathcal{B}_{\alpha},$ we have
\begin{equation}\nonumber
\|fg\|_{\alpha}\leq \|f\|_{\alpha}\|g\|_{\alpha}.
\end{equation}

\subsection{Operator estimates} \label{subsection:operator}

We now restrict to a specific range of values for $\alpha.$
As we have remarked above, the restrictions $L_{1}\in(0,2\pi)$ and $L_{2}\in(0,2\pi)$ imply
that $\sigma(k)>0$ for all $k\in\mathbb{Z}^{2}\setminus\{0\}.$  Clearly, then, we also have
$\sigma(k)/|k|>0,$ for all $k\in\mathbb{Z}^{2}.$  Furthermore, as $|k|\rightarrow\infty,$ we have
$\sigma(k)/|k|\rightarrow\infty.$  We introduce some notation, and we conclude the following:
\begin{equation}\nonumber
A:=\inf_{k\in\mathbb{Z}^{2}\setminus\{0\}}\frac{\sigma(k)}{|k|}>0.
\end{equation}
We require $\alpha\in(0,A).$

We introduce the linear operators $I_{1}$ and $I_{2},$ defined through their symbols as
\begin{equation}\nonumber
I_{1}h(k,t)=\frac{2\pi i k_{1}}{L_{1}}\int_{0}^{t}e^{\sigma(k)(s-t)}\hat{h}(k,s)\ ds,
\end{equation}
\begin{equation}\nonumber
I_{2}h(k,t)=\frac{2\pi i k_{2}}{L_{2}}\int_{0}^{t}e^{\sigma(k)(s-t)}\hat{h}(k,s)\ ds.
\end{equation}
Clearly these are nearly the same operator; $I_{1}$ involves a differentiation with respect to $x,$ and $I_{2}$ instead
involves a differentiation with respect to $y.$  We will show that these are bounded operators on $\mathcal{B}_{\alpha},$
but we will only include the details for $I_{1}.$

To begin, we let $h\in\mathcal{B}_{\alpha},$ and we have the definition of the norm of $I_{1}h,$
\begin{equation}\nonumber
\|I_{1}h\|_{\alpha}=\sum_{k\in\mathbb{Z}^{2}}\sup_{t\in[0,\infty)}
\left|\frac{2\pi i k_{1}}{L_{1}}e^{\alpha t |k|}
\int_{0}^{t}e^{(s-t)\sigma(k)}\hat{h}(k,s)\ ds\right|.
\end{equation}
Notice that there is no contribution to the sum when $k=0,$ since in that case we also have $k_{1}=0.$
We use this observation and the triangle inequality to arrive at
\begin{equation}\nonumber
\|I_{1}h\|_{\alpha}\leq\sum_{k\in\mathbb{Z}^{2}\setminus\{0\}}\sup_{t\in[0,\infty)}\frac{2\pi |k_{1}|}{L_{1}}e^{\alpha t |k|}
\int_{0}^{t}e^{(s-t)\sigma(k)}\left|\hat{h}(k,s)\right|\ ds.
\end{equation}
We manipulate factors of exponentials, and we rearrange using supremum inequalities:
\begin{multline}\label{normOfHTimesSomething}
\|I_{1}h\|_{\alpha}\leq\sum_{k\in\mathbb{Z}^{2}\setminus\{0\}}\sup_{t\in[0,\infty)}\frac{2\pi |k_{1}|}{L_{1}}e^{\alpha t|k|}
\int_{0}^{t}e^{(s-t)\sigma(k)-\alpha s|k|}\left|e^{\alpha s|k|}\hat{h}(k,s)\right|\ ds
\\
\leq
\left(\sum_{k\in\mathbb{Z}^{2}}\sup_{s\in[0,\infty)}\left|e^{\alpha s |k|}\hat{h}(k,s)\right|\right)
\left(\sup_{t\in[0,\infty)}\sup_{k\in\mathbb{Z}^{2}\setminus\{0\}}\frac{2\pi |k_{1}|}{L_{1}}
e^{\alpha t |k|}\int_{0}^{t}e^{(s-t)\sigma(k)-\alpha s |k|}\ ds\right)\\
=\left(\sup_{t\in[0,\infty)}\sup_{k\in\mathbb{Z}^{2}\setminus\{0\}}\frac{2\pi |k_{1}|}{L_{1}}
e^{\alpha t |k|}\int_{0}^{t}e^{(s-t)\sigma(k)-\alpha s |k|}\ ds\right)\|h\|_{\alpha}.
\end{multline}
As long as the final quantity in parentheses is finite, we have therefore demonstrated that $I_{1}$ is a bounded linear operator
on $\mathcal{B}_{\alpha}.$

Therefore, we check that this quantity is finite.  We rearrange factors and evaluate the resulting integral:
\begin{multline}\nonumber
\sup_{t\in[0,\infty)}\sup_{k\in\mathbb{Z}^{2}\setminus\{0\}}
\frac{2\pi |k_{1}|}{L_{1}}e^{\alpha t |k|}\int_{0}^{t}e^{(s-t)\sigma(k)-\alpha s |k|}\ ds
\\
=\sup_{t\in[0,\infty)}\sup_{k\in\mathbb{Z}^{2}\setminus\{0\}}\frac{2\pi |k_{1}|}{L_{1}}
e^{t(\alpha |k|-\sigma(k))}\int_{0}^{t}e^{s(\sigma(k)-\alpha |k|)}\ ds
\\
=\sup_{t\in[0,\infty)}\sup_{k\in\mathbb{Z}^{2}\setminus\{0\}}\frac{2\pi |k_{1}|}{L_{1}}\Big(e^{t(\alpha |k|-\sigma(k))}\Big)
\frac{e^{t(\sigma(k)-\alpha |k|)}-1}{\sigma(k)-\alpha |k|}.
\end{multline}
We simplify this, we bound $k_{1}$ by $k,$ and we bound $\sigma(k)/|k|$ by $A:$
\begin{multline}
\sup_{t\in[0,\infty)}\sup_{k\in\mathbb{Z}^{2}\setminus\{0\}}
\frac{2\pi |k_{1}|}{L_{1}}e^{\alpha t |k|}\int_{0}^{t}e^{(s-t)\sigma(k)-\alpha s |k|}\ ds
\leq \left(\frac{2\pi}{L_{1}}\right)\frac{1-e^{t(\alpha |k|-\sigma(k))}}{\frac{\sigma(k)}{|k|}-\alpha}.
\\
\leq \left(\frac{2\pi}{L_{1}}\right)\frac{1}{A-\alpha}.
\end{multline}
We have proven that $I_{1}$ is a bounded linear operator on $\mathcal{B}_{\alpha}.$  The same is true of $I_{2}.$

\subsection{Contraction mapping}\label{subsection:contraction}

We rewrite our operator $\mathcal{T}$ using the notations $I_{1}$ and $I_{2}:$
\begin{equation}\nonumber
\mathcal{T}(u,v)(\cdot,t)=\left(\begin{array}{cc}
e^{-t(\Delta^{2}+\Delta)}u_{0}+\frac{1}{2}I_{1}(u^{2}+v^{2})(\cdot,t)\\
e^{-t(\Delta^{2}+\Delta)}v_{0}+\frac{1}{2}I_{2}(u^{2}+v^{2})(\cdot,t)
\end{array}\right).
\end{equation}

\begin{lemma}\label{semigroupActingOnB0}
Let $f\in B_{0}$ be given, and let $\alpha\in A$ be given.  Then
$e^{-t(\Delta^{2}+\Delta)}f\in\mathcal{B}_{\alpha}.$
\end{lemma}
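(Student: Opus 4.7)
The plan is to argue entirely on the Fourier side. The semigroup $e^{-t(\Delta^{2}+\Delta)}$ has symbol $e^{-t\sigma(k)}$, so
\begin{equation*}
\mathcal{F}\bigl(e^{-t(\Delta^{2}+\Delta)}f\bigr)(k,t) = e^{-t\sigma(k)}\hat{f}(k).
\end{equation*}
Plugging into the definition of $\|\cdot\|_{\alpha}$, one gets
\begin{equation*}
\bigl\|e^{-t(\Delta^{2}+\Delta)}f\bigr\|_{\alpha} = \sum_{k\in\mathbb{Z}^{2}}|\hat{f}(k)|\sup_{t\in[0,\infty)}e^{t(\alpha|k|-\sigma(k))}.
\end{equation*}

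Next I would analyze the supremum termwise. For $k=0$ the exponent is identically zero (since $\sigma(0)=0$), and the supremum is $1$. For $k\neq 0$, I would write $\alpha|k|-\sigma(k)=|k|(\alpha-\sigma(k)/|k|)$ and invoke the definition of $A$ from Subsection~\ref{subsection:operator} together with the hypothesis $\alpha\in(0,A)$ to conclude that $\alpha|k|-\sigma(k)\le |k|(\alpha-A)\le 0$. Hence the function $t\mapsto e^{t(\alpha|k|-\sigma(k))}$ is non-increasing on $[0,\infty)$ and its supremum equals $1$. Combining the two cases,
\begin{equation*}
\bigl\|e^{-t(\Delta^{2}+\Delta)}f\bigr\|_{\alpha}\le \sum_{k\in\mathbb{Z}^{2}}|\hat{f}(k)| = |f|_{0}<\infty,
\end{equation*}
so the element lies in $\mathcal{B}_{\alpha}$ in norm.

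It remains only to verify that $t\mapsto e^{-t(\Delta^{2}+\Delta)}f$ is continuous from $[0,\infty)$ to $B_{0}$, which is required by the definition of $\mathcal{B}_{\alpha}$. Fix $t_{0}\ge 0$ and consider $|e^{-t(\Delta^{2}+\Delta)}f-e^{-t_{0}(\Delta^{2}+\Delta)}f|_{0}=\sum_{k}|e^{-t\sigma(k)}-e^{-t_{0}\sigma(k)}||\hat{f}(k)|$. Each summand tends to $0$ as $t\to t_{0}$, and each is bounded by $2|\hat{f}(k)|$ uniformly in $t$ on compact intervals (using $\sigma(k)\geq -C$ for some $C$ only when $k$ is small, but in fact $\sigma(k)\ge 0$ in our range since $L_{1},L_{2}\in(0,2\pi)$, so $|e^{-t\sigma(k)}|\le 1$). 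The dominated convergence theorem then gives continuity. There is no real obstacle here; the only thing to be careful about is the elementary inequality comparing $\alpha|k|$ to $\sigma(k)$, which is exactly what the quantity $A$ and the range $\alpha<A$ were engineered to handle.
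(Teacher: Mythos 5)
Your proof is correct and follows essentially the same route as the paper: bound the $\mathcal{B}_{\alpha}$ norm termwise on the Fourier side, using $\alpha<A\leq\sigma(k)/|k|$ for $k\neq 0$ so each supremum in $t$ equals $1$, giving $\|e^{-t(\Delta^{2}+\Delta)}f\|_{\mathcal{B}_{\alpha}}\leq|f|_{0}$. The only difference is that you additionally verify continuity of $t\mapsto e^{-t(\Delta^{2}+\Delta)}f$ into $B_{0}$ (required by the definition of $\mathcal{B}_{\alpha}$), a point the paper's proof leaves implicit; your dominated convergence argument for it is fine.
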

\begin{proof}
We estimate the norm of $e^{-t(\Delta^{2}+\Delta)}f$ as follows:
\begin{multline}\nonumber
\|e^{-t(\Delta^{2}+\Delta)}f\|_{\mathcal{B}_{\alpha}}
=\sum_{k\in\mathbb{Z}^{2}}\sup_{t\in[0,\infty)}e^{\alpha t|k|-\sigma(k)t}|\hat{f}(k)|
\\
=|\hat{f}(0)|+\sum_{k\in\mathbb{Z}^{2}\setminus\{0\}}\sup_{t\in[0,\infty)}\left(e^{\alpha-\sigma(k)/|k|}\right)^{|k|t}|\hat{f}(k)|.
\end{multline}
Since $\alpha\in(0,A),$ we see that $\alpha-\sigma(k)/|k|<0$ for all $k\in\mathbb{Z}^{2}\setminus\{0\}.$
We thus conclude $\|e^{-t(\Delta^{2}+\Delta)}f\|_{\mathcal{B}_{\alpha}}\leq |f|_{0}.$
\end{proof}

We will show that if $u_{0}$ and $v_{0}$ are sufficiently small in $B_{0},$ then  $\mathcal{T}$ is a contraction in a ball
in $\mathcal{B}_{\alpha}.$  We define the ball now, but we leave the radius to be determined.
For $r>0,$ we define $X_{r}$ to be
\begin{equation}\nonumber
X_{r}=\{(f,g)\in\mathcal{B}_{\alpha}\times\mathcal{B}_{\alpha}:
\|(f-e^{-t(\Delta^{2}+\Delta)}u_{0},g-e^{-t(\Delta^{2}+\Delta)}v_{0})\|_{\mathcal{B}_{\alpha}\times\mathcal{B}_{\alpha}}<r\}.
\end{equation}
So, this is the open ball centered at $(e^{-t(\Delta^{2}+\Delta)}u_{0},e^{-t(\Delta^{2}+\Delta)}v_{0}),$
with radius $r,$ in $\mathcal{B}_{\alpha}\times\mathcal{B}_{\alpha}.$
We let $r_{1}$ denote an upper bound on the size of $u_{0}$ and $v_{0}$ in $B_{0}:$
\begin{equation}\nonumber
|u_{0}|_{0}+|v_{0}|_{0}<r_{1}.
\end{equation}
Note that we then have a bound on the size of any element of $X_{r}:$ for all $(f,g)\in X_{r},$
\begin{equation}\label{elementOfXBound}
\|(f,g)\|_{\mathcal{B}_{\alpha}\times\mathcal{B}_{\alpha}}<r+r_{1}.
\end{equation}

We next wish to show that $\mathcal{T}$ maps $X_{r}$ to $X_{r},$ at least when $r$ and $r_{1}$ are sufficiently small.
Let $(f,g)\in X_{r}.$  We compute the distance from $\mathcal{T}(f,g)$ to the center of the ball:
\begin{equation}\nonumber
\|\mathcal{T}(f,g)-e^{-t(\Delta^{2}+\Delta)}(u_{0},v_{0})\|_{\mathcal{B}_{\alpha}\times\mathcal{B}_{\alpha}}
=\frac{1}{2}\|I_{1}(f^{2}+g^{2})\|_{\mathcal{B}_{\alpha}}+\frac{1}{2}\|I_{2}(f^{2}+g^{2})\|_{\mathcal{B}_{\alpha}}.
\end{equation}
Since we have demonstrated that $I_{1}$ and $I_{2}$ are bounded linear operators, and since $\mathcal{B}_{\alpha}$ is
a Banach algebra, this can be bounded as follows:
\begin{equation}\nonumber
\|\mathcal{T}(f,g)-e^{-t(\Delta^{2}+\Delta)}(u_{0},v_{0})\|_{\mathcal{B}_{\alpha}\times\mathcal{B}_{\alpha}}
\leq \frac{1}{2}(\|I_{1}\|+\|I_{2}\|)\left(\|f\|_{\mathcal{B}_{\alpha}}^{2}+\|g\|_{\mathcal{B}_{\alpha}}^{2}\right).
\end{equation}
Using \eqref{elementOfXBound}, we then have
\begin{equation}\label{almostDoneXToX}
\|\mathcal{T}(f,g)-e^{-t(\Delta^{2}+\Delta)}(u_{0},v_{0})\|_{\mathcal{B}_{\alpha}\times\mathcal{B}_{\alpha}}
\leq\frac{1}{2}(\|I_{1}\|+\|I_{2}\|)(r+r_{1})^{2}.
\end{equation}
We want the right-hand side of \eqref{almostDoneXToX} to be less than $r.$

Next we establish our contracting property.
Let $(f_{1},g_{1})\in X_{r}$ and $(f_{2},g_{2})\in X_{r}.$
Then, we compute the norm of the difference, after applying $\mathcal{T}:$
\begin{equation}\nonumber
\|\mathcal{T}(f_{1},g_{1})-\mathcal{T}(f_{2},g_{2})\|_{\mathcal{B}_{\alpha}\times\mathcal{B}_{\alpha}}
\leq \frac{1}{2}(\|I_{1}\|+\|I_{2}\|)\|f_{1}^{2}-f_{2}^{2}\|_{\mathcal{B}_{\alpha}}
+\frac{1}{2}(\|I_{1}\|+\|I_{2})\|g_{1}^{2}-g_{2}^{2}\|_{\mathcal{B}_{\alpha}}.
\end{equation}
We use factoring, the triangle inequality, and \eqref{elementOfXBound}, to bound this:
\begin{multline}\nonumber
\|\mathcal{T}(f_{1},g_{1})-\mathcal{T}(f_{2},g_{2})\|_{\mathcal{B}_{\alpha}\times\mathcal{B}_{\alpha}}
\\
\leq\frac{1}{2}(\|I_{1}\|+\|I_{2}\|)(\|f_{1}\|_{\mathcal{B}_{\alpha}}+\|f_{2}\|_{\mathcal{B}_{\alpha}})
\|f_{1}-f_{2}\|_{\mathcal{B}_{\alpha}}+
\\
+\frac{1}{2}(\|I_{1}\|+\|I_{2}\|)(\|g_{1}\|_{\mathcal{B}_{\alpha}}+\|g_{2}\|_{\mathcal{B}_{\alpha}})
\|g_{1}-g_{2}\|_{\mathcal{B}_{\alpha}}
\\
\leq(\|I_{1}\|+\|I_{2}\|)(r+r_{1})\|(f-f_{1},g-g_{1})\|_{\mathcal{B}_{\alpha}\times\mathcal{B}_{\alpha}}.
\end{multline}
Thus, for the contracting property, we require
\begin{equation}\label{contractingCondition}
(\|I_{1}\|+\|I_{2}\|)(r+r_{1})<1.
\end{equation}

If we take $r_{1}=\frac{1}{3(\|I_{1}\|+\|I_{2}\|)+1}$ and $r=2r_{1},$ then the right-hand side of \eqref{almostDoneXToX}
is indeed less than $r,$ and \eqref{contractingCondition} is also satisfied.  We
have proven the following theorem.

\begin{theorem}\label{firstMainTheorem}
Let $A$ be as above, and let $\alpha\in(0,A)$ be given.  Let $u_{0}\in B_{0}$ and $v_{0}\in B_{0}$ be
such that $|u_{0}|_{0}+|v_{0}|_{0}\leq\frac{1}{3(\|I_{1}\|+\|I_{2}\|)+1}.$  Let $r=\frac{2}{3(\|I_{1}\|+\|I_{2}\|)+1},$
and let $X_{r}$ be as above.  Then there exists $(u,v)\in X_{r}\subseteq\mathcal{B}_{\alpha}\times\mathcal{B}_{\alpha}$
such that $(u,v)$ is a solution of \eqref{duhamel1}, \eqref{duhamel2}.  The solution $(u,v)$ is unique in $X_{r}.$
\end{theorem}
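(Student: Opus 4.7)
The plan is to apply the Banach fixed point theorem to the operator $\mathcal{T}$ on a complete metric space derived from $X_{r}$. The essential analytical inputs---namely, that $I_{1}$ and $I_{2}$ are bounded linear operators on $\mathcal{B}_{\alpha}$, that $\mathcal{B}_{\alpha}$ is a Banach algebra, and that $e^{-t(\Delta^{2}+\Delta)}u_{0}$ and $e^{-t(\Delta^{2}+\Delta)}v_{0}$ lie in $\mathcal{B}_{\alpha}$ by Lemma~\ref{semigroupActingOnB0}---have already been established in Sections~\ref{subsection:operator} and~\ref{subsection:contraction}. Consequently, the remaining argument is largely a matter of bookkeeping.

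First, I would pass to the closed ball $\overline{X_{r}}$ inside the Banach space $\mathcal{B}_{\alpha}\times\mathcal{B}_{\alpha}$, which inherits a complete metric structure from the ambient norm. Next, I would verify the two hypotheses of the contraction mapping principle by invoking the estimates already derived. Estimate \eqref{almostDoneXToX} gives, for any $(f,g)\in\overline{X_{r}}$, the bound $\frac{1}{2}(\|I_{1}\|+\|I_{2}\|)(r+r_{1})^{2}$ on the distance from $\mathcal{T}(f,g)$ to the center $e^{-t(\Delta^{2}+\Delta)}(u_{0},v_{0})$; substituting the specific choices $r_{1}=\frac{1}{3(\|I_{1}\|+\|I_{2}\|)+1}$ and $r=2r_{1}$ reduces this to $\frac{9}{2}r_{1}\cdot\frac{\|I_{1}\|+\|I_{2}\|}{3(\|I_{1}\|+\|I_{2}\|)+1}$, which, since the second factor is strictly less than $\frac{4}{9}$, is strictly less than $2r_{1}=r$. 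Hence $\mathcal{T}$ maps $\overline{X_{r}}$ into the open ball $X_{r}$. Likewise, the Lipschitz bound leading to \eqref{contractingCondition} yields a contraction constant $(\|I_{1}\|+\|I_{2}\|)(r+r_{1}) = \frac{3(\|I_{1}\|+\|I_{2}\|)}{3(\|I_{1}\|+\|I_{2}\|)+1}<1$, so $\mathcal{T}$ is a strict contraction on $\overline{X_{r}}$.

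With these two properties in hand, the Banach fixed point theorem produces a unique $(u,v)\in\overline{X_{r}}$ with $\mathcal{T}(u,v)=(u,v)$; the strict inequality in the previous paragraph guarantees that this fixed point actually lies in the open $X_{r}$. Unwinding the definition of $\mathcal{T}$, the fixed-point identity is precisely the pair of Duhamel relations \eqref{duhamel1}--\eqref{duhamel2}, so $(u,v)$ is the desired mild solution, and uniqueness within $X_{r}$ follows directly from the contraction property.

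Given how much of the work has been front-loaded into the operator estimates and the algebra property, I do not anticipate a genuine obstacle at this stage. The only subtlety worth flagging is the mild mismatch between the open ball $X_{r}$ in the statement of the theorem and the closed ball required by the abstract fixed-point theorem; this is resolved by the strict nature of the self-mapping estimate, which both justifies passing to $\overline{X_{r}}$ to apply the theorem and then confirms that the fixed point belongs to $X_{r}$ itself.
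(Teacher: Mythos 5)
Your argument is correct and follows essentially the same route as the paper: a contraction mapping argument on the ball $X_{r}$ centered at $(e^{-t(\Delta^{2}+\Delta)}u_{0},e^{-t(\Delta^{2}+\Delta)}v_{0})$, using the boundedness of $I_{1}$, $I_{2}$, the algebra property of $\mathcal{B}_{\alpha}$, and Lemma~\ref{semigroupActingOnB0}, with the same choices $r_{1}=\frac{1}{3(\|I_{1}\|+\|I_{2}\|)+1}$, $r=2r_{1}$ verifying \eqref{almostDoneXToX} and \eqref{contractingCondition}. Your additional remark about passing to the closed ball $\overline{X_{r}}$ and using the strict self-mapping inequality to return to the open ball is a minor tidying of a point the paper leaves implicit, not a different approach.
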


\begin{remark}\label{firstAnalyticRemark}
Since the solution is in $\mathcal{B}_{\alpha}\times\mathcal{B}_{\alpha},$
we automatically know that the solution
exists for all $t\in[0,\infty)$ and is analytic at all
positive times, with the radius of analyticity growing linearly in time.
\end{remark}

\section{The general case: Short-time existence for small data in the Wiener algebra}

We now let the parameters $L_{1}$ and $L_{2}$ be arbitrary positive numbers; in this general case,
we are unable to prove a global existence theorem, even for small data, as in the previous section.
Instead, we let $T>0,$ and we will prove that sufficiently small solutions exist on the interval $[0,T],$
and that on this time interval, as before, the radius of analyticity of these solutions will grow linearly
in time.  In fact, this linear growth rate can be taken to be arbitrarily large; for larger values of the parameter,
$\alpha,$ measuring the growth rate of the radius of analyticity, and for larger values of the time horizon, $T,$
the amplitude of our solutions must be taken to be smaller.

The details of the proof in the current section are similar to the proof of Theorem \ref{firstMainTheorem},
so we do not reproduce every detail.  Instead, we will focus on the differences with the previous proof.
The first difference is in the definition of the function space.  Let $\alpha>0$ and $T>0$ be given.  We define
\begin{equation}\nonumber
\mathcal{B}_{\alpha,T}=\{f:\|f\|_{\alpha,T}<\infty\},
\end{equation}
where the norm is defined by
\begin{equation}\nonumber
\|f\|_{\alpha,T}=\sum_{k\in\mathbb{Z}^{2}}\sup_{t\in[0,T]}\left(e^{\alpha t |k|}|\hat{f}(k,t)|\right).
\end{equation}
Just as the space $\mathcal{B}_{\alpha}$ was a Banach algebra, so is $\mathcal{B}_{\alpha,T},$
with the estimate
\begin{equation}\nonumber
\|fg\|_{\mathcal{B}_{\alpha,T}}\leq\|f\|_{\mathcal{B}_{\alpha,T}}\|g\|_{\mathcal{B}_{\alpha,T}}.
\end{equation}

Next, we need a version of Lemma \ref{semigroupActingOnB0}.
\begin{lemma}\label{semigroupLemma2}
Let $\alpha>0$ and $T>0$ be given, and let $f\in B_{0}$ be given.  Then $e^{-t(\Delta^{2}+\Delta)}f\in\mathcal{B}_{\alpha,T}.$
\end{lemma}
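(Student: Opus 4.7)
The plan is to mimic the proof of Lemma~\ref{semigroupActingOnB0}, but handle the absence of the smallness condition $L_1,L_2<2\pi$ by isolating the finitely many modes that can grow. First I would write out the norm explicitly using the fact that $e^{-t(\Delta^2+\Delta)}$ acts diagonally in Fourier, giving
\begin{equation}\nonumber
\|e^{-t(\Delta^{2}+\Delta)}f\|_{\alpha,T}
=\sum_{k\in\mathbb{Z}^{2}}|\hat{f}(k)|\;\sup_{t\in[0,T]}e^{t(\alpha|k|-\sigma(k))}.
\end{equation}
The strategy is then to bound the inner supremum by a constant that depends only on $\alpha$, $T$, $L_1$, $L_2$ (and in particular not on $k$), at which point the sum is controlled by $|f|_0<\infty$.

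To produce such a bound, I would split $\mathbb{Z}^2$ into two pieces according to the sign of $\alpha|k|-\sigma(k)$. For $k$ with $\alpha|k|-\sigma(k)\le 0$ the supremum over $[0,T]$ is simply $1$, attained at $t=0$. For $k$ with $\alpha|k|-\sigma(k)>0$ the supremum equals $e^{T(\alpha|k|-\sigma(k))}$. The key observation, and the only real content beyond Lemma~\ref{semigroupActingOnB0}, is that the second set is \emph{finite}: from \eqref{symbolOfLinearPart} we see that $\sigma(k)$ grows like $|k|^4$ for $|k|$ large, so $\sigma(k)/|k|\to\infty$ as $|k|\to\infty$, and therefore $\alpha|k|-\sigma(k)>0$ can hold only for $k$ in some finite set $K=K(\alpha,L_1,L_2)$. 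On this finite set the factor $e^{T(\alpha|k|-\sigma(k))}$ takes only finitely many values, hence has a finite maximum.

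Combining the two cases, I would set
\begin{equation}\nonumber
C(\alpha,T,L_1,L_2):=\max\left\{1,\;\max_{k\in K}e^{T(\alpha|k|-\sigma(k))}\right\}<\infty,
\end{equation}
and conclude
\begin{equation}\nonumber
\|e^{-t(\Delta^{2}+\Delta)}f\|_{\alpha,T}\le C(\alpha,T,L_1,L_2)\sum_{k\in\mathbb{Z}^{2}}|\hat{f}(k)|
=C(\alpha,T,L_1,L_2)\,|f|_{0},
\end{equation}
which proves the lemma. The only thing that might look like an obstacle is justifying the finiteness of $K$ and hence of $C$; but this is immediate from the quartic growth of $\sigma$, so the argument is essentially a bookkeeping exercise. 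Unlike in Lemma~\ref{semigroupActingOnB0}, the resulting bound is not dimension-free: the constant $C$ blows up as $T\to\infty$ whenever $K$ is nonempty, which is precisely why one only obtains a short-time (rather than global) result in this general setting.
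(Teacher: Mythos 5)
Your proposal is correct and follows essentially the same route as the paper: both split the frequencies into the finitely many modes where $\alpha|k|-\sigma(k)\geq 0$ (the paper's set $\Omega_{1}$, your set $K$) and the complementary set where the exponential factor is bounded by $1$, then take a uniform bound over the finite set on $[0,T]$ to conclude $\|e^{-t(\Delta^{2}+\Delta)}f\|_{\alpha,T}\leq C\,|f|_{0}$.
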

\begin{proof}
We begin by writing the norm of $e^{-t(\Delta^{2}+\Delta)}f$ as follows:
\begin{multline}\label{lemmaWithTSpaces1}
\|e^{-t(\Delta^{2}+\Delta)}f\|_{\mathcal{B}_{\alpha,T}}
=\sum_{k\in\mathbb{Z}^{2}}\sup_{t\in[0,T]}e^{\alpha t|k|-\sigma(k)t}|\hat{f}(k)|
\\
=|\hat{f}(0)|+\sum_{k\in\mathbb{Z}^{2}\setminus\{0\}}\sup_{t\in[0,T]}\left(e^{\alpha-\sigma(k)/|k|}\right)^{|k|t}|\hat{f}(k)|.
\end{multline}
We decompose $\mathbb{Z}^{2}\setminus\{0\}$ as the union of the two disjoint sets $\Omega_{1}$ and $\Omega_{2},$
where these are defined as
\begin{equation}\nonumber
\Omega_{1}=\{k\in\mathbb{Z}^{2}\setminus\{0\}:\alpha-\sigma(k)/|k|\geq 0\},
\end{equation}
\begin{equation}\nonumber
\Omega_{2}=\{k\in\mathbb{Z}^{2}\setminus\{0\}:\alpha-\sigma(k)/|k|<0\}.
\end{equation}
We note that because of the nature of the symbol $\sigma,$ the set $\Omega_{1}$ is finite (possibly empty), and the
set $\Omega_{2}$ is infinite.  We thus continue our estimate by breaking the sum on the right-hand side of
\eqref{lemmaWithTSpaces1} into sums over $\Omega_{1}$ and $\Omega_{2}:$
\begin{multline}\nonumber
\|e^{-t(\Delta^{2}+\Delta)}f\|_{\mathcal{B}_{\alpha,T}}=|\hat{f}(0)|+\sum_{k\in\Omega_{1}}\sup_{t\in[0,T]}
\left(e^{\alpha-\sigma(k)/|k|}\right)^{|k|t}|\hat{f}(k)|\\
+\sum_{k\in\Omega_{2}}\sup_{t\in[0,T]}
\left(e^{\alpha-\sigma(k)/|k|}\right)^{|k|t}|\hat{f}(k)|.
\end{multline}
We manipulate each of these two sums:
\begin{multline}\nonumber
\|e^{-t(\Delta^{2}+\Delta)}f\|_{\mathcal{B}_{\alpha,T}}=|\hat{f}(0)|
+\left(\sup_{k\in\Omega_{1}}\sup_{t\in[0,T]}\left(e^{\alpha-\sigma(k)/|k|}\right)^{|k|t}\right)
\sum_{k\in\Omega_{1}}|\hat{f}(k)|\\
+\sum_{k\in\Omega_{2}}\sup_{t\in[0,T]}|\hat{f}(k)|.
\end{multline}
Since $\Omega_{1}$ is a finite set, the quantity
$\displaystyle\sup_{k\in\Omega_{1}}\sup_{t\in[0,T]}\left(e^{\alpha-\sigma(k)/|k|}\right)^{|k|t}$ is finite.
The conclusion of the lemma now follows.
\end{proof}

We next need the operator estimates for $I_{1}$ and $I_{2}.$  Since these are extremely similar operators,
we will provide details only for $I_{1}.$  We follow the argument of Section \ref{subsection:operator}
through \eqref{normOfHTimesSomething}, finding the following:
\begin{equation}\label{normOfHTimesSomething2}
\|I_{1}h\|_{\alpha,T}
\leq\left(\sup_{t\in[0,T]}\sup_{k\in\mathbb{Z}^{2}\setminus\{0\}}
\frac{2\pi|k_{1}|}{L_{1}}e^{\alpha t|k|}\int_{0}^{t}
e^{(s-t)\sigma(k)-\alpha s|k|}\ ds\right)
\|h\|_{\alpha,T}.
\end{equation}
We thus must verify that the quantity in parentheses on the right-hand side of \eqref{normOfHTimesSomething2}
is finite.  We consider the cases $k\in\Omega_{1}$ and $k\in\Omega_{2}$ separately.
Since $t\in[0,T]$ and since $\Omega_{1}$ is a finite set, we clearly have
\begin{equation}\nonumber
\left(\sup_{t\in[0,T]}\sup_{k\in\Omega_{1}}
\frac{2\pi|k_{1}|}{L_{1}}e^{\alpha t|k|}\int_{0}^{t}
e^{(s-t)\sigma(k)-\alpha s|k|}\ ds\right)<\infty.
\end{equation}
For $k\in\Omega_{2},$ we proceed instead as in Section \ref{subsection:operator}, by evaluating the integral:
\begin{multline}\nonumber
\sup_{t\in[0,T]}\sup_{k\in\Omega_{2}}
\frac{2\pi|k_{1}|}{L_{1}}e^{\alpha t|k|}\int_{0}^{t}
e^{(s-t)\sigma(k)-\alpha s|k|}\ ds
\\
=\sup_{t\in[0,T]}\sup_{k\in\Omega_{2}}
\frac{2\pi|k_{1}|}{L_{1}}e^{\alpha t|k|-t\sigma(k)}
\left(\frac{e^{t\sigma(k)-\alpha t|k|}-1}{\sigma(k)-\alpha|k|}
\right).
\end{multline}
For $k\in\Omega_{2},$ we have $\sigma(k)-\alpha|k|>0,$ so
we can bound this as
\begin{equation}\label{endOfModifiedI1Bound}
\sup_{t\in[0,T]}\sup_{k\in\Omega_{2}}
\frac{2\pi|k_{1}|}{L_{1}}e^{\alpha t|k|}\int_{0}^{t}
e^{(s-t)\sigma(k)-\alpha s|k|}\ ds
\leq\sup_{k\in\Omega_{2}}
\frac{2\pi|k_{1}|}{L_{1}}
\left(\frac{1}{\sigma(k)-\alpha|k|}
\right).
\end{equation}
From the definition of the symbol $\sigma,$ we can see now that the
quantity on the right-hand side of \eqref{endOfModifiedI1Bound} is finite.
This completes the proof that $I_{1}$ is a bounded operator from $\mathcal{B}_{\alpha,T}$ to itself.

For some $r>0,$ and for $u_{0}\in B_{0}$ and $v_{0}\in B_{0},$ we then must define the ball $\tilde{X}_{r}$ to be
\begin{equation}\nonumber
\tilde{X}_{r}=\{(f,g)\in(\mathcal{B}_{\alpha,T})^{2}:
\|(f-e^{-t(\Delta^{2}+\Delta)}u_{0},g-e^{-t(\Delta^{2}+\Delta)}v_{0})\|_{(\mathcal{B}_{\alpha,T})^{2}}<r\}.
\end{equation}
Then, the proof from Section \ref{subsection:contraction} may be carried out identically, to prove the following theorem:
\begin{theorem}Let $\alpha>0$ and $T>0$ be given.  Let $u_{0}\in B_{0}$ and $v_{0}\in B_{0}$ be such that
$|u_{0}|_{0}+|v_{0}|_{0}\leq\frac{1}{3(\|I_{1}\|+\|I_{2}\|)+1}.$  Let $r=\frac{2}{3(\|I_{1}\|+\|I_{2}\|)+1},$ and let
$\tilde{X}_{r}$ be as above.  Then there exists
$(u,v)\in\tilde{X}_{r}\subseteq\mathcal{B}_{\alpha,T}\times\mathcal{B}_{\alpha,T}$ such that $(u,v)$ is a solution of
\eqref{duhamel1}, \eqref{duhamel2}.  The solution $(u,v)$ is unique in $\tilde{X}_{r}.$
\end{theorem}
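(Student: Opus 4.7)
The plan is to apply the Banach fixed-point theorem to $\mathcal{T}$ on $\tilde{X}_r \subset (\mathcal{B}_{\alpha,T})^2$, essentially mirroring the argument of Section~\ref{subsection:contraction}. The three analytical ingredients I need are already assembled: Lemma~\ref{semigroupLemma2} places the center of $\tilde{X}_r$, namely $(e^{-t(\Delta^2+\Delta)}u_0, e^{-t(\Delta^2+\Delta)}v_0)$, in $(\mathcal{B}_{\alpha,T})^2$ with a bound in terms of $|u_0|_0+|v_0|_0$; the estimate culminating in \eqref{endOfModifiedI1Bound} shows that $I_1,I_2$ are bounded linear operators on $\mathcal{B}_{\alpha,T}$; and the Banach algebra property $\|fg\|_{\mathcal{B}_{\alpha,T}}\leq\|f\|_{\mathcal{B}_{\alpha,T}}\|g\|_{\mathcal{B}_{\alpha,T}}$ has been recorded. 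From the first ingredient one extracts the pointwise bound $\|(f,g)\|_{(\mathcal{B}_{\alpha,T})^2}<r+r_1$ for every $(f,g)\in\tilde{X}_r$.

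For the self-mapping step, I would rewrite $\mathcal{T}$ through $I_1,I_2$ and compute
\[
\|\mathcal{T}(f,g)-e^{-t(\Delta^2+\Delta)}(u_0,v_0)\|_{(\mathcal{B}_{\alpha,T})^2}
= \tfrac{1}{2}\bigl(\|I_1(f^2+g^2)\|_{\mathcal{B}_{\alpha,T}}+\|I_2(f^2+g^2)\|_{\mathcal{B}_{\alpha,T}}\bigr),
\]
which the operator bounds and the algebra property dominate by $\tfrac{1}{2}(\|I_1\|+\|I_2\|)(r+r_1)^2$. For the contraction step I would factor $f_1^2-f_2^2=(f_1+f_2)(f_1-f_2)$ and similarly for $g$, then apply the same three ingredients to obtain
\[
\|\mathcal{T}(f_1,g_1)-\mathcal{T}(f_2,g_2)\|_{(\mathcal{B}_{\alpha,T})^2}
\leq (\|I_1\|+\|I_2\|)(r+r_1)\,\|(f_1-f_2,g_1-g_2)\|_{(\mathcal{B}_{\alpha,T})^2}.
\]
The self-mapping requirement $\tfrac{1}{2}(\|I_1\|+\|I_2\|)(r+r_1)^2<r$ and the contraction requirement $(\|I_1\|+\|I_2\|)(r+r_1)<1$ are both satisfied by the choices $r_1=1/(3(\|I_1\|+\|I_2\|)+1)$ and $r=2r_1$, via the same arithmetic as in Section~\ref{subsection:contraction}. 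The Banach fixed point theorem then delivers the unique fixed point $(u,v)\in\tilde{X}_r$ of $\mathcal{T}$, i.e.\ the desired solution of \eqref{duhamel1}, \eqref{duhamel2}.

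There is no serious obstacle: the structural work---splitting $\mathbb{Z}^2\setminus\{0\}$ into the finite set $\Omega_1$ of modes with $\alpha-\sigma(k)/|k|\geq 0$ and the cofinite set $\Omega_2$---has been absorbed into the preceding operator and semigroup estimates. It is precisely this split that forces the finite horizon: both $\|I_1\|+\|I_2\|$ and the implicit constant from Lemma~\ref{semigroupLemma2} grow with $T$ and with $\alpha$, so the smallness threshold on $|u_0|_0+|v_0|_0$ shrinks accordingly and a global-in-time conclusion is unavailable in the general case, in contrast with Theorem~\ref{firstMainTheorem}.
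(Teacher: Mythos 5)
Your proposal is correct and takes essentially the same route as the paper: the paper's own proof consists precisely of Lemma~\ref{semigroupLemma2}, the boundedness of $I_{1},I_{2}$ on $\mathcal{B}_{\alpha,T}$ via the $\Omega_{1}$/$\Omega_{2}$ split, and the observation that the contraction argument of Section~\ref{subsection:contraction} then carries over with the same choices of $r_{1}$ and $r$. Your closing remark that the operator norms and the constant implicit in Lemma~\ref{semigroupLemma2} grow with $\alpha$ and $T$, so that the smallness threshold shrinks and no global-in-time conclusion is available, matches the paper's Remark~\ref{secondAnalyticityRemark}.
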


\begin{remark} \label{secondAnalyticityRemark}
As in Remark \ref{firstAnalyticRemark}, we see from the definition of the spaces $\mathcal{B}_{\alpha,T}$ that
these solutions need not be analytic initially, but become analytic at any positive time, and the radius of analyticity
grows like $\alpha t.$  Moreover, this linear growth rate, $\alpha,$ can be made arbitrarily large, with the caveat that
for larger $\alpha,$ the amplitude threshold $r$ must be taken smaller.
\end{remark}

Finally, we further remark that the proof of this section goes through with few changes to provide the analagous theorem
on the domain $\mathbb{R}^{2}.$  To state the theorem, we must introduce the appropriate function spaces.
Define the Wiener algebra on $\mathbb{R}^{2}$ to be the set of functions $f:\mathbb{R}^{2}\rightarrow\mathbb{R}$ with
integrable Fourier transform:
\begin{equation}\nonumber
\breve{B}_{0}=\left\{f:\int_{\mathbb{R}^{2}}|\hat{f}(\xi)|\ d\xi<\infty\right\}.
\end{equation}
We will repeat the previous notation for the norm in this space, denoting
\begin{equation}\nonumber
|f|_{0}=\int_{\mathbb{R}^{2}}|\hat{f}(\xi)|\ d\xi.
\end{equation}
Given $\alpha>0$ and $T>0,$ we define $\breve{\mathcal{B}}_{\alpha,T}$ in the corresponding way:
\begin{equation}\nonumber
\breve{\mathcal{B}}_{\alpha,T}=\{f:\|f\|_{\breve{\mathcal{B}}_{\alpha,T}}<\infty\},
\end{equation}
where the norm is defined as
\begin{equation}\nonumber
\|f\|_{\breve{\mathcal{B}}_{\alpha,T}}=\int_{\mathbb{R}^{2}}\sup_{t\in[0,T]}e^{\alpha t |\xi|}|\hat{f}(\xi,t)| \ d\xi.
\end{equation}
Then, the analogous lemma to Lemma \ref{semigroupLemma2} holds.
We may thus define the ball $\breve{X}_{r},$ for some $u_{0}\in\breve{B}_{0},$ $v_{0}\in\breve{B}_{0},$ and $r>0:$
\begin{equation}\nonumber
\breve{X}_{r}=\{(f,g)\in(\breve{\mathcal{B}}_{\alpha,T})^{2}:
\|(f-e^{-t(\Delta^{2}+\Delta)}u_{0},g-e^{-t(\Delta^{2}+\Delta)}v_{0})\|_{(\breve{\mathcal{B}}_{\alpha,T})^{2}}<r\}.
\end{equation}
The definitions of $I_{1}$ and $I_{2}$ need not be changed, as long as $k$ is now understood to be the continuous
Fourier variable.  The proof that $I_{1}$ and $I_{2}$ are bounded operators on $\breve{\mathcal{B}}_{\alpha,T}$
is entirely similar to the previous calculation of the present section.  Again, the proof of Section \ref{subsection:contraction}
may be carried out identically, to prove the following theorem:
\begin{theorem}Let $\alpha>0$ and $T>0$ be given.  Let $u_{0}\in \breve{B}_{0}$ and $v_{0}\in \breve{B}_{0}$ be such that
$|u_{0}|_{0}+|v_{0}|_{0}\leq\frac{1}{3(\|I_{1}\|+\|I_{2}\|)+1}.$  Let $r=\frac{2}{3(\|I_{1}\|+\|I_{2}\|)+1},$ and let
$\breve{X}_{r}$ be as above.  Then there exists
$(u,v)\in\breve{X}_{r}\subseteq\breve{\mathcal{B}}_{\alpha,T}\times\breve{\mathcal{B}}_{\alpha,T}$
such that $(u,v)$ is a solution of
\eqref{duhamel1}, \eqref{duhamel2}.  The solution $(u,v)$ is unique in $\breve{X}_{r}.$
\end{theorem}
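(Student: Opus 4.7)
The plan is to adapt the proof of the preceding torus theorem by replacing the Fourier series on $\mathbb{T}^{2}$ by the Fourier transform on $\mathbb{R}^{2}$, so that every sum $\sum_{k\in\mathbb{Z}^{2}}$ becomes an integral $\int_{\mathbb{R}^{2}}d\xi$. Three ingredients need to be reestablished in this continuous setting: that $\breve{\mathcal{B}}_{\alpha,T}$ is a Banach algebra, the analog of Lemma \ref{semigroupLemma2} for the semigroup acting on $\breve{B}_{0}$, and the boundedness of $I_{1}$ and $I_{2}$ on $\breve{\mathcal{B}}_{\alpha,T}$. With these in place the contraction-mapping argument of Section \ref{subsection:contraction} transfers verbatim and gives, with the same choice $r_{1}=1/(3(\|I_{1}\|+\|I_{2}\|)+1)$ and $r=2r_{1}$, both the self-map and contraction properties on $\breve{X}_{r}$.

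For the Banach algebra property, writing $\widehat{fg}$ as a convolution, using the subadditivity $e^{\alpha t|\xi|}\le e^{\alpha t|\xi-\eta|}e^{\alpha t|\eta|}$, pulling $\sup_{t\in[0,T]}$ inside the $\eta$-integrand, and applying Fubini yield $\|fg\|_{\breve{\mathcal{B}}_{\alpha,T}}\le\|f\|_{\breve{\mathcal{B}}_{\alpha,T}}\|g\|_{\breve{\mathcal{B}}_{\alpha,T}}$. For the semigroup estimate I would split the $\xi$-integral along $\Omega_{1}=\{\xi\ne 0:\alpha\ge\sigma(\xi)/|\xi|\}$ and $\Omega_{2}=\{\xi\ne 0:\alpha<\sigma(\xi)/|\xi|\}$, where $\sigma(\xi)$ is the symbol of $\Delta^{2}+\Delta$ on $\mathbb{R}^{2}$. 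The set $\Omega_{1}$ is now a bounded subset of $\mathbb{R}^{2}$ (rather than a finite set), so $\sup_{\xi\in\Omega_{1}}\sup_{t\in[0,T]}(e^{\alpha-\sigma(\xi)/|\xi|})^{|\xi|t}$ is finite just by boundedness of $|\xi|$ on $\Omega_{1}$; on $\Omega_{2}$ that factor is $\le 1$.

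The genuinely new subtlety lies in the boundedness of $I_{1}$. Repeating the manipulations leading to \eqref{normOfHTimesSomething2} and evaluating the time integral reduces the task to controlling
\[
M:=\sup_{t\in[0,T]}\sup_{\xi\ne 0}2\pi|\xi_{1}|\cdot\frac{1-e^{-t(\sigma(\xi)-\alpha|\xi|)}}{\sigma(\xi)-\alpha|\xi|}.
\]
On the torus, $\Omega_{2}$ is discrete and bounded away from the zero set of $\sigma(k)-\alpha|k|$, so the analogous supremum was immediate from the asymptotics $\sigma(k)-\alpha|k|\sim|k|^{4}$ at infinity. On $\mathbb{R}^{2}$, however, $\Omega_{2}$ accumulates at that zero set and $1/(\sigma(\xi)-\alpha|\xi|)$ is unbounded near the interface with $\Omega_{1}$; this is the main obstacle. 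The remedy is the elementary bound $(1-e^{-y})/y\le\min(1,1/y)$ for $y>0$, which gives
\[
\frac{1-e^{-t(\sigma(\xi)-\alpha|\xi|)}}{\sigma(\xi)-\alpha|\xi|}\le\min\!\left(t,\frac{1}{\sigma(\xi)-\alpha|\xi|}\right)\le\min\!\left(T,\frac{1}{\sigma(\xi)-\alpha|\xi|}\right).
\]
Using the first bound in a large bounded neighborhood of $\Omega_{1}$ (where $|\xi_{1}|$ is controlled) and the second bound for large $|\xi|$ (where $\sigma(\xi)-\alpha|\xi|\sim|\xi|^{4}$, so that $|\xi_{1}|/(\sigma(\xi)-\alpha|\xi|)\lesssim|\xi|^{-3}\to 0$), one sees that $M<\infty$. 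The same argument bounds $I_{2}$, and the contraction argument of Section \ref{subsection:contraction} then closes the proof.
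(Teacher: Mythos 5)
Your proposal is correct and follows essentially the same route as the paper, which simply asserts that the torus argument of Sections 2--3 carries over to $\mathbb{R}^{2}$ "with few changes": Banach algebra property of $\breve{\mathcal{B}}_{\alpha,T}$, the semigroup lemma via the split into the bounded set $\Omega_{1}$ and its complement, boundedness of $I_{1},I_{2}$, and then the contraction argument of Section \ref{subsection:contraction} verbatim. The one point you elaborate beyond the paper is genuine: on $\mathbb{R}^{2}$ the final torus bound \eqref{endOfModifiedI1Bound} does not transcribe literally, since $\sigma(\xi)-\alpha|\xi|$ is no longer bounded away from zero on $\Omega_{2}$, and your fix via $\bigl(1-e^{-y}\bigr)/y\leq\min(1,1/y)$ (use $\leq t\leq T$ on a bounded neighborhood of the interface, $\lesssim|\xi|^{-3}$ at high frequency) is exactly what is needed. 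The only small imprecision is that this bound is valid for $y>0$, i.e.\ on $\Omega_{2}$; on $\Omega_{1}$ itself, where $\sigma(\xi)-\alpha|\xi|\leq0$, you should instead bound the time integral directly, which is immediate because $\Omega_{1}$ is bounded and $t\leq T$ -- the same boundedness argument you already use in your semigroup lemma.
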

Remark \ref{secondAnalyticityRemark} applies in this case as well.

\section{The general case: Short-time existence for large data}
\label{s:shortime}

In this section, we present a proof of short-time existence for large data in
$L^2(\TT^2)$ when growing modes are present. That is, in this section $L_1$ and
$L_2$ can take any value in $(0,\infty)$. We include a proof of this result for
completeness. In fact, short-time existence  in Gevrey spaces of the solution in
the whole space were obtained in \cite{BS07}.

We employ the same mild formulation used for data in the Wiener algebra and
again a contraction mapping argument.  Because of the presence of growing modes,
it does not follow directly from this proof that global existence holds for
sufficiently small data in $L^2(\TT^2)$.  We also choose to work with $L^2$ and
$L^2$-based Sobolev spaces  $H^s(\TT^2)=W^{s,2}(\TT^2)$, $s\in \RR$, so as to
use elementary Fourier analysis and  not to obscure the proof with technical
details, but a similar result is expected to hold in $L^p(\TT^2)$, $1<p<\infty$.
However, a Littlewood-Paley characterization of  $L^p$ and $W^{s,p}$ is needed
in this case.

Using \eqref{duhamel1} and \eqref{duhamel2}, and setting
\[
             \bu = (u,v): \TT^2 \times \RR^+ \to \RR^2,
\]
we again write the 2DKS as a fixed point equation:
\[
        \bu(t) = e^{-t\cL} \bu_0 -\int_0^t e^{-(t-\tau) \cL}
\nabla\left(\frac{|\bu|^2}{2}\right)\, d\tau
        = \cT(\bu(t)),
\]
where, for convenience we have introduced the notation $\bu(t)(x,y):=\bu(x,y,
t)$ and the operator
\[
       \cL:=\Delta^2+\Delta.
\]

Again for convenience, we set \ $\tbk :=  2\pi
\big(\frac{k_1}{L_1},\frac{k_2}{L_2}\big)$, where $\bk=(k_1,k_2)\in \ZZ^2$, so
$\tbk\in \widetilde\ZZ^2:=2\pi L_1^{-1}\ZZ\times 2\pi L_2^{-1}\ZZ$. With slight
abuse of notation, we write $\what{f}(\tbk)$ for $\what{f}(\bk)$, and  similarly
$\sigma(\tbk)$ for $\sigma(\bk)$, where $\sigma$ is given in
\eqref{symbolOfLinearPart}.

By Plancherel's formula, the norm in $H^s(\TT^2)$ can then be expressed as:
\begin{equation}\label{e:HsNorm}
    \|f\|_{H^s(\TT^2)}^2 = \sum_{\tbk\in \widetilde{\ZZ}^2} (1+|\tbk|^2)^s
|\what{f}(\tbk)|^2, \qquad
    s\in \RR
\end{equation}
and $L^2(\TT^2)\equiv H^0(\TT^2)$. Since we consider data and solutions with
finite energy, it will be convenient to work with the norm in homogeneous
Sobolev spaces $\dot{H}^s(\TT^2)$, defined by:
\begin{equation}\label{e:HsHomNorm}
    \|f\|_{\dot{H}^s(\TT^2)}^2 := \sum_{\tbk\in \widetilde{\ZZ}^2\setminus
\{0\}} |\tbk|^{2s}
    |\what{f}(\tbk)| ^2 = \||\nabla|^s f\|_{L^2(\TT^2)}^2, \qquad      s\in \RR,
\end{equation}
and observe that \ $f\in H^s \Leftrightarrow f\in \dot{H}^s$ if $f\in L^2$.
Above, $|\nabla|^s$ denotes the Fourier multiplier with symbol $|\tbk|^s$,
$\tbk\ne \mathbf{0}$. This definition agrees with the standard definition \
$\|f\|_{H^m} = \|\nabla^m f\|_{L^2}$ if $s=m\in \ZZ_+$.

Our main result in this section is the following Theorem.

\begin{theorem} \label{t:ShortTimeExt}
 Let $\bu_0\in L^2$. Then, there exists $0<T<\infty$ and a unique mild solution
$\bu$ of 2DKS
 on $[0,T)$ with initial data $\bu_0$ such that $\bu\in C([0,T);L^2)$. In
addition, for all $t\in(0,T),$ the solution satisfies $\bu(t)\in
 H^s$, for all $0\leq s<5/3,$ and also satisfies:
 \[
      \sup_{0<t<T} e^t\, t^{-\frac{s}{4}} \|\bu(t)\|_{\dot{H}^s} \leq C(L_1,
L_2, s, \|\bu_0\|_{L^2}),
      \qquad 1\leq s< 5/3.
 \]
\end{theorem}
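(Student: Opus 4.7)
The plan is to apply Banach's fixed-point theorem to the operator $\cT$ in a space of $L^2$-valued trajectories equipped with a weighted $\dot{H}^1$ norm that captures the parabolic smoothing of the fourth-order semigroup $e^{-t\cL}$. The key analytical ingredient is a bi-harmonic-type smoothing estimate for $e^{-t\cL}$. From the symbol \eqref{symbolOfLinearPart} we have $\sigma(\tbk)=|\tbk|^4-|\tbk|^2\ge\tfrac{1}{2}|\tbk|^4-\tfrac{1}{2}$ by Young's inequality, so that $e^{-t\sigma(\tbk)}\le e^{t/2}e^{-t|\tbk|^4/2}$; together with the elementary bound $|\tbk|^s e^{-t|\tbk|^4/2}\le C(s)\,t^{-s/4}$ this produces
\[
\bigl\||\nabla|^s e^{-t\cL} f\bigr\|_{L^2}\le C(s)\,e^{t/2}\,t^{-s/4}\,\|f\|_{L^2},\qquad s\ge 0.
\]
The factor $e^{t/2}$ absorbs the finitely many linearly unstable modes and accounts for the exponential prefactor in the theorem.

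For the nonlinearity I would use the two-dimensional Ladyzhenskaya inequality
\[
\bigl\||\bu|^2\bigr\|_{L^2}\le C\|\bu\|_{L^2}\|\bu\|_{\dot{H}^1},
\]
and work in the complete space
\[
X_T=\{\bu\in C([0,T];L^2):\|\bu\|_{X_T}<\infty\},\quad \|\bu\|_{X_T}:=\sup_{t\in[0,T]}\|\bu(t)\|_{L^2}+\sup_{t\in(0,T]}t^{1/4}\|\bu(t)\|_{\dot{H}^1},
\]
looking for a fixed point of $\cT$ in a ball centered at $e^{-t\cL}\bu_0$. Applying the semigroup estimate with one extra gradient (costing an additional factor $(t-\tau)^{-1/4}$) together with Ladyzhenskaya reduces both the $L^2$ and the weighted $\dot{H}^1$ estimates of the Duhamel term to beta integrals of the form $\int_0^t(t-\tau)^{-1/4-a}\tau^{-1/4}\,d\tau\sim t^{1/2-a}$ with $a\in\{0,1/4\}$. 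All resulting powers of $t$ are strictly positive, so for $T=T(\|\bu_0\|_{L^2})$ sufficiently small $\cT$ is both a self-map and a strict contraction on this ball, producing the unique mild solution $\bu\in C([0,T);L^2)$ with $t^{1/4}\bu\in L^\infty((0,T);\dot{H}^1)$.

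The higher-regularity bound $\|\bu(t)\|_{\dot{H}^s}\lesssim t^{-s/4}$ for $s\in[1,5/3)$ is then obtained by bootstrapping in the same Duhamel formula. One estimates $\||\bu|^2\|_{\dot{H}^{s-1}}$ via a Kato--Ponce/Moser-type product inequality, combined with interpolation between $L^2$ and the weighted $\dot{H}^1$ norm already controlled, and feeds the result back through the semigroup smoothing with $|\nabla|^{s+1}$. The resulting beta integral in $\tau$ dictates the threshold $s<5/3$, and the exponential prefactor is again inherited from $e^{t/2}$. The main obstacle is precisely this bootstrap step: the Sobolev exponents in the product estimate must be arranged so that the time integral both converges and reproduces a power of $t$ consistent with the postulated decay $t^{-s/4}$ of $\|\bu(\tau)\|_{\dot{H}^s}$, making the induction in $s$ close. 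This bookkeeping, together with the 2D-specific Sobolev embeddings needed to make sense of $|\bu|^2$ in the relevant spaces, is the delicate part of the argument, whereas the initial $L^2$-to-weighted-$\dot{H}^1$ contraction is a routine Kato-type computation.
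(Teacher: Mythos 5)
Your base argument is correct, but it takes a genuinely different route from the paper. The paper fixes $s\in[1,5/3)$ from the start and runs the contraction directly in the space $\cX_T$ whose norm controls $\sup_t\|\bu(t)\|_{L^2}$ together with $\sup_t e^{-t}t^{s/4}\|\bu(t)\|_{\dot{H}^s}$: the quadratic term is placed in $L^1$ by Cauchy--Schwarz and handled with the $L^1\to\dot{H}^{s+1}$ smoothing bound \eqref{e:SemigroupBound}, while the $\dot{H}^s$ estimate splits the semigroup into two halves and uses the algebra property of $\dot{H}^s\cap L^2$ for $s>1$; the threshold $s<5/3$ arises exactly because the resulting factor $g(T)\sim T^{(5-3s)/4}$ (for $T<1$) must tend to $0$ as $T\to0$ for the self-map/contraction conditions to hold. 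You instead contract only at the level $s=1$, replacing the $L^1$-based bound by Ladyzhenskaya (note that on $\TT^2$ without a zero-average assumption this requires the full $H^1$ norm rather than $\dot{H}^1$ alone, which is harmless since $L^2$ is part of your norm), and then propose to recover the range $1\le s<5/3$ by a bootstrap. The exponential prefactors are treated the same way in both arguments, and your derivation of the smoothing estimate from $\sigma(\tbk)\ge\tfrac12|\tbk|^4-\tfrac12$ is a clean equivalent of \eqref{e:SobolevBound2}. What the paper's choice buys is that the stated $\dot{H}^s$ bound comes for free from the fixed-point space; what yours buys is a more elementary base step, at the price of a separate regularity argument.

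That separate argument is the soft spot: you only sketch it, and the one quantitative claim you attach to it is off. The threshold $s<5/3$ does \emph{not} come from your beta integral; it is an artifact of the paper's self-consistent weighted-$\dot{H}^s$ contraction, where both factors in the product cost $\tau^{-s/4}$. In your bootstrap, the natural limitation is the product estimate (e.g. $H^1\cdot H^1\subset H^{s-1}$ on $\TT^2$ requires $s<2$), the weighted bound $\|\bu(\tau)\|_{H^1}\lesssim\max(1,\tau^{-1/4})$ gives $\||\bu|^2(\tau)\|_{\dot{H}^{s-1}}\lesssim\max(1,\tau^{-1/2})$, the two derivatives in $|\nabla|^{s+1}e^{-(t-\tau)\cL}$ acting from $\dot{H}^{s-1}$ cost $(t-\tau)^{-1/2}$, and the integral $\int_0^t(t-\tau)^{-1/2}\tau^{-1/2}\,d\tau$ is bounded; the decay rate $t^{-s/4}$ in the conclusion then comes essentially from the linear term $e^{-t\cL}\bu_0$, the Duhamel contribution being better for small $t$. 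So the bootstrap does close (indeed it is not limited to $s<5/3$), but as written it is asserted rather than carried out, and the attribution of the $5/3$ threshold to it signals that the bookkeeping was not actually checked. With those details supplied, your proof establishes the theorem by a more modular route than the paper's.
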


In what follows, unless otherwise noted, $C$ denotes a generic constant that may
depend on indices, such as $s$,  and $L_1,L_2$, but not on $\bu$, $t$, or $\bk$.
%, and we will occasionally use the shorthand notation
%\ $\lesssim$ for $\leq C$.

\subsection{Operator estimates} We estimate the operator norm of the semigroup
$e^{-t\cL}$, $t\geq 0$,  in $H^s$ and its smoothing properties for $t>0$.  We
define the operator $e^{-t\cL}$, as before, simply by:
\[
   e^{-t\cL} f(x) = \cF^{-1}(e^{-t\sigma(\tbk)}\,\what{f}(\tbk)),
\]
whenever this expression is well defined, where $\cF$ denotes the Fourier
Transform on $\TT^2$.
We observe that $e^{-t\cL}$ is strongly continuous in any Sobolev space $H^s$.

Given $L_1, L_2$, there is a finite number of (distinct) frequencies $\tbk_i$,
$i=1,\ldots, N$,   depending on $L_1, L_2$,  for which $\sigma (\tbk)<0$. We
order them by increasing size, that is, $0<|\tbk_1|\leq
|\tbk_2|\leq\ldots\leq|\tbk_N|<1$.
In fact, $\sigma(\tbk)\leq 0$ corresponds to the parabolic region
$\kappa^2-\kappa\leq 0$, where $\kappa = |\tbk|^2$. Therefore, $\sigma(\tbk)\geq
-1/4$, and its minimum occurs at the frequency $\tbk_j$ such that $|\tbk|^2$ is
closest to $1/2$, which we will henceforth denote by $\tbk^0$ and which depends
only on $L_1$ and $L_2$.

We therefore immediately have:
\begin{equation} \label{e:SobolevBound1}
   \|e^{-t\cL}f\|_{\dot{H}^s} \leq e^{-(t\sigma(\tbk_0))} \| f\|_{H^s} \leq C\,
e^{t/4}
   \, \|f\|_{\dot{H}^s}, \qquad \forall t\geq 0, \; s\in \RR.
\end{equation}
We will need also smoothing estimates of the semigroup for $t>0$. We temporarily
fix two numbers $s,r\in \RR$, $r<s$. Using again the Plancherel formula gives:
\[
     \|e^{-t\cL} f\|_{\dot{H}^s}^2 \leq \||\tbk|^{2(s-r)} e^{-2t(|\tbk|^4-|\tbk|^2)}
     \|_{\ell^\infty(\tilde{\ZZ}^2)} \|f\|_{\dot{H}^r}^2.
\]
Therefore, it is enough to estimate the first factor on the right.
An elementary calculation gives:
\[
    \kappa^{(s-r)}\, e^{-\kappa^2 +\sqrt{t} \kappa} \leq C\, e^t \begin{cases}
     1, & \quad \kappa \geq \sqrt{t}, \\
       t^{(s-r)/2}, & \quad 0<\kappa<\sqrt{t},
    \end{cases}
\]
from which it follows, setting $\kappa=t^{1/2}|\tbk|^2$:
\[
     |\tbk|^{s-r} e^{-t(|\tbk|^4-|\tbk|^2)}
       \leq C\,  e^t \begin{cases}
        t^{(r-s)/2}, & \quad  |\tbk|\geq 1, \\
     1, & \quad 0< |\tbk| <1,
    \end{cases}\  \leq C\, e^t\ \max(1,t^{(r-s)/2}).
\]
Therefore, we obtain  the following estimate:
\begin{equation}\label{e:SobolevBound2}
   \|e^{-t\cL}f\|_{\dot{H}^s} \leq  C\, e^{t/2}\ \max(1,t^{(r-s)/4})
\|f\|_{\dot{H}^r}
\end{equation}

We will also need to estimate the action $e^{-t \cL}$ from $L^1$ to $L^2$ to
bound the non-linear term. This is more easily done in the context of the Wiener
algebra, using the algebra structure. Here, instead, we use that the Fourier
transform is well behaved in $L^1$ and $L^2$.
By Plancherel's again, and Young-Hausd\"orff inequalities, we have:
\[
  \begin{aligned}
   \|e^{-t \cL} f\|_{L^2(\TT^2)} &\leq  (\sup_{\tbk\in \tilde{\ZZ}^2}
|\widehat{f}(\tbk)|) \;
   \|e^{-t\sigma(\cdot)}\|_{\ell^2(\tilde{\ZZ}^2)}\\
    & \leq  \|f\|_{L^1(\TT^2)} \;
\|e^{-t\sigma(\cdot)}\|_{\ell^2(\tilde{\ZZ}^2)}.
  \end{aligned}
\]
So, it is enough to estimate the last term on the right. To this end, we perform
again a ``high-low" frequency decomposition, as follows:
\[
  \sum_{\tbk \in \tilde{\ZZ}^2} e^{-2t\sigma(\tbk)} \leq
  \sum_{j=1}^N e^{-2t\sigma(\tbk_j)} + \sum_{\tbk \in \tilde{\ZZ}^2, |\tbk|>
|\tbk_N| }
       e^{-2t\sigma(\tbk)}= \II_1+\II_2.
\]
The first sum can be easily estimated as before by the maximum of the symbol,
since $N$ depends only on the periods:
\[
    \II_1\leq C\, e^{t}.
\]
The second sum can be estimated as follows. First, observe that
\[
   \II_2 \leq  \sum_{\tbk \in \tilde{\ZZ}^2, |\tbk|> |\tbk_N| } e^{-2 t \beta
|\tbk|^4}, \qquad \beta=\beta(L_1,L_2)>0,
\]
since $|\tbk|> 1$ if $|\tbk|> |\tbk_N|$, by construction, so $\alpha
|\tbk|^4\geq |\tbk|^2$ for some $0<\alpha<1$, depending on the periods, hence
$\sigma(\tbk)>(1-\alpha)|\tbk|^4$. Then:
\[
   \sum_{\tbk \in \tilde{\ZZ}^2, |\tbk|> |\tbk_N| } e^{- 2 t \beta |\tbk|^4}\leq
   \int_{\RR^2} e^{- 2 t\beta \mathbf{x}^4}\, d\mathbf{x} =
\frac{\pi^{3/2}}{2\sqrt{t 2 \beta}}.
\]
Therefore, putting together these estimates, we obtain:
\begin{equation} \label{e:L1Bound}
  \|e^{-t \cL} f\|_{L^2(\TT^2)} \leq C\, e^{t/2} \max\big(1,t^{-1/4}\big)\,
\|f\|_{L^1(\TT^2)}, \qquad t>0.
\end{equation}

Combining \eqref{e:SobolevBound1}, \eqref{e:SobolevBound2}, and
\eqref{e:L1Bound}, and using the semigroup property, we finally have:
\begin{equation} \label{e:SemigroupBound}
 \begin{aligned}
   \|e^{-t \cL} f\|_{\dot{H}^s(\TT^2)} &\leq C\, e^{\frac{t}{2}}
\max\big(1,t^{-\frac{s}{4}}\big) \,
   \|e^{\frac{t}{2} \cL} f\|_{L^2(\TT^2)} \\
   &\leq C\, e^{t} \max\big(1,t^{-\frac{s+1}{4}\big)}\,
   \|f\|_{L^1(\TT^2)} , \qquad   s>0, \; t>0.
 \end{aligned}
\end{equation}
Observe that this estimate implies that  $e^{t \cL}$ as a map from $L^1$ into
$\dot{H}^s$ is locally integrable in time as long as $0<s<3$.

In the next section, we apply the operator bounds on $e^{-t\cL}$ component-wise
on $\bu$.

\subsection{The contraction mapping argument}

We introduce a Banach space adapted to the non-linear map $\cT$ obtained via the
Duhamel's formula and we will apply Banach contraction mapping to a suitable
ball in this space.

In the remainder of this section, we fix $1\leq s<5/3$, and we choose an
arbitrary initial data $\bu_0\in L^2$. recall that $\cT=\cT_{\bu_0}$ even if we
do not explicitly show this dependence.

Given $0<T\leq \infty$, , we define the space:
%\begin{equation}
 \begin{multline}\label{e:AdaptedSpaceDef}
   \cX_T = \cX_{s,T} := \{ \bu: \TT^2 \times \RR_+\to \RR^2 \;\mid\;
   \bu\in C((0,T);L^2), \\ \;
   e^{-t} \,t^{\frac{s}{4}}\, \bu \in L^\infty((0,T);\dot{H}^s)\},
 \end{multline}
%\end{equation}
which is a Banach space equipped with the norm:
\[
   \|\bu\|_{\cX_T} := \sup_{0<t<T} \|\bu(t)\|_{L^2} + \sup_{0<t<T} e^{-t}
\,t^{\frac{s}{4}}
   \|\bu(t)\|_{\dot{H}^s}.
\]

We will prove the following bound:
\begin{equation*}%\label{e:QuadraticEst}
  \|\cT(\bu)-\cT(\bv)\|_{\cX_T} \leq C(T)\, (\|\bu\|_{\cX_T} +\|\bv\|_{\cX_T})
\, \|\bu-\bv\|_{\cX_T},
\end{equation*}
with an explicit dependence of the constant $C$ on $T$.
We observe that, thanks to  \eqref{e:SemigroupBound}, $\bu=e^{-t\cL} \bu_0\in
\cX_T$ for any $0<T<\infty$. Then, since $\cT$ is a quadratic map and $e^{-t\cL}$
is strongly continuous on $H^s$ for any $s$, establishing a bound of this type
proves that $\cT:\cX_T\to \cX_T$ and that $\cT$ is locally Lipschitz in $\cX_T$.

Since $\nabla$ and $\cL$ commute as Fourier multipliers on the torus, we write:
\[
  \|\cT(\bu)-\cT(\bv)\|_{\cX_T} = \frac{1}{2} \left\|\int_0^t  \nabla e^{-t\cL}\,
(\bu+\bv)(\tau)\cdot (\bu-\bv)
   (\tau) \, d\tau\right\|_{\cX_T}.
\]
We first bound the norm in $L^\infty((0,T);L^2)$, which is simply done using
\eqref{e:SemigroupBound} with $s=1$, Minkowski's inequality for integrals, and
H\"older's inequality:
\[
   \begin{aligned}
        \Bigg\|\int_0^t  \nabla e^{-t\cL}&\,\big[ (\bu+\bv)(\tau)\cdot (\bu-\bv)
   (\tau) \big]\, d\tau\Bigg\|_{L^2}\\
   &\leq  C\,  \int^t_0  e^{(t-\tau)} \max\big(1,(t-\tau)^{-\frac{1}{2}}\big)
\|(\bu+\bv)(\tau)\cdot (\bu-\bv)(\tau)\|_{L^1}\, d\tau \\
    & \leq  C\,   e^t\,(t^{\frac{1}{2}}+t) (\|\bu\|_{L^\infty ((0,T);L^2)}+
    \|\bv\|_{L^\infty((0,T);L^2)}) \,
    \|\bu-\bv\|_{L^\infty((0,T);L^2)}.
   \end{aligned}
\]
By using that $e^t$ and $t^{\frac{1}{2}}+t$ are strictly increasing, it follows:
\begin{equation}\label{e:L2LipEst}
      \|\cT(\bu) - \cT(\bv)\|_{L^\infty((0,T);L^2)} \leq  C\,
e^T\,(T^{\frac{1}{2}}+T)
    (\|\bu\|_{\cX_T}+ \|\bv\|_{\cX_T}) \,
    \|\bu-\bv\|_{\cX_T}.
\end{equation}

We next tackle the estimate in $\dot{H}^s$.  Thanks to  \eqref{e:HsHomNorm}, we
need to bound, for $0<t<T$:

   \begin{multline}\nonumber
       \Bigg\|\int_0^t  |\nabla|^{s}  e^{(t-\tau)\cL}\nabla\,\big[
(\bu+\bv)(\tau)\cdot (\bu-\bv)
   (\tau) \big]\, d\tau\Bigg\|_{L^2} \\
     =   \left\|\int_0^t  |\nabla|^{s} e^{\frac{(t-\tau)\cL}{2}}
     \big[ \nabla e^{\frac{(t-\tau)\cL}{2}}
     \, \big((\bu+\bv)(\tau)\cdot (\bu-\bv)(\tau)\big) \big]\right\|_{L^2}\, d\tau.
\end{multline}
We use \eqref{e:SobolevBound2}, and continue as follows:
\[
\begin{aligned}
       \Bigg\|\int_0^t&  |\nabla|^{s}  e^{(t-\tau)\cL}\nabla\,\big[
(\bu+\bv)(\tau)\cdot (\bu-\bv)
   (\tau) \big]\, d\tau\Bigg\|_{L^2} \\
     & \leq C\, \int^t_0 e^{\frac{(t-\tau)}{2}} \max \left(1,
     \frac{1}{(t-\tau)^{\frac{s}{4}}}\right)\, \times \\
     & \qquad \quad \qquad \,  \|e^{\frac{(t-\tau)\cL}{2}} \, \nabla \,\left((
     \bu+  \bv)(\tau)\cdot (\bu-\bv)(\tau)\right) \|_{L^2} \, d\tau \\
     &\leq C\, \int^t_0 e^{(t-\tau)} \max
     \left(1,\frac{1}{(t-\tau)^{\frac{s}{4}}}
      \right) \max \left(1,\frac{1}{(t-\tau)^{\frac{s-1}{4}}}\right) \times \\
     &   \qquad \quad \qquad \, \| |\nabla|^s \,\left(( \bu+  \bv)(\tau)\cdot
(\bu-\bv)(\tau)\right)
     \|_{L^2} \, d\tau \\
     &\leq C\, \int^t_0 e^{(t-\tau)} \max
\left(1,\frac{1}{(t-\tau)^{\frac{2s-1}{4}}}
      \right)\,  e^{2\tau} \,\max\left(1,\frac{1}{\tau^{\frac{s}{2}}}\right) \,
d\tau\, \times \\
       &\qquad \qquad \qquad \,   \left( \| \bu\|_{\cX_T}+\|\bv\|_{\cX_T}
\right)\,
     \|(\bu-\bv)\|_{\cX_T},
   \end{aligned}
\]
where we used that $\dot{H}^s\cap L^2$ is an algebra for $s>1$ (see e.g.
\cite{TayPDEIII}), and \eqref{e:SobolevBound2} with $r=0$.
A similar estimate can be obtained if $s=1$, using instead Leibniz formula and
\eqref{e:SemigroupBound}.
Next, we estimate the integral in $\tau$ on the last line above. To do so, we
consider different cases, depending on whether $t$ and/or $\tau$ are less or
greater than one. Combining these different cases, we have the following
estimate:
\[
  \begin{aligned}
  \int^t_0 e^{(t-\tau)} \max \Big(1,&\frac{1}{(t-\tau)^{\frac{2s-1}{4}}}\Big)
  \,  e^{2\tau} \,\max\left(1,\frac{1}{\tau^{\frac{s}{2}}}\right) \,
d\tau\, \\
   %   &\qquad \qquad \leq C\, e^{2t} \, \left(  t^{1-\frac{s}{2}} + \int^t_0 \,
%\frac{1}{\tau^{\frac{s}{2}}}
 %     \,  \frac{1}{(t-\tau)^{\frac{2s-1}{4}}} d\tau \right) \\
    &\qquad \qquad \qquad \quad \leq C\, e^{3t} \, \left[  t+ t^{1-\frac{s}{2}}
    + t^{\frac{5}{4}-s} \right],
  \end{aligned}
\]
where $C$ can be estimated more explicitly (in terms of the Gamma function),
but it is not needed for our purposes.
%where $\Gamma$ denotes the Gamma function.
Therefore:
\begin{equation} \label{e:SobolevLipEst}
  \begin{aligned}
   \sup_{0<t<T} e^{-t}\, t^{s/4}  \,\|\int_0^t  |\nabla|^{s}
e^{(t-\tau)\cL}\nabla\,\big[ (\bu+\bv)
  & (\tau)\cdot (\bu-\bv)(\tau) \big]\, d\tau\|_{L^2} \\
     \leq C\, \sup_{0<t<T} e^{2t} \, \big[ t^{1+\frac{s}{4}}+
t^{1-\frac{s}{4}} +
t^{\frac{5-3s}{4}} \big] &     (\|\bu\|_{\cX_T} +\|\bv\|_{\cX_T})b \,
\|\bu-\bv\|_{\cX_T},
   \\  \leq C\, e^{2T} \big[ T^{1+\frac{s}{4}} + T^{1-\frac{s}{4}} +
T^{\frac{5-3s}{4}} \big]\times
&     (\|\bu\|_{\cX_T} +\|\bv\|_{\cX_T})b \, \|\bu-\bv\|_{\cX_T},
   \end{aligned}
\end{equation}
using that all exponents are positive if $s<5/3$.
Combining \eqref{e:L2LipEst} and \eqref{e:SobolevLipEst}, we finally obtain:
\begin{multline} \label{e:LocalLipEst}
   \|\cT(\bu)-\cT(\bv)\|_{\cX_T} \leq C \, g(T)
(\|\bu\|_{\cX_T} +\|\bv\|_{\cX_T})b \, \|\bu-\bv\|_{\cX_T}, \\
   g(T) := e^{2T} \begin{cases} T^{1+\frac{s}{4}}, & T\geq 1, \\
T^{\frac{5-3s}{4}}, & 0<T<1.
   \end{cases}
\end{multline}

We next show that $\cT$ maps a ball in $\cX_T$ to a ball in $\cX_T$, the size of
which depends on the size of the initial data. We let $\Bar{C}$ denote the
largest among all the constants appearing in the operator estimates and the
Lipschitz estimate on $\cT$ in $\cX_T$. We stress that this constant depends
only on $L_1$, $L_2$ and $s$.

We let $\widetilde{M}=\|\bu_0\|_{L^2}$. Then, from \eqref{e:SobolevBound2} it
follows that
\[
   \|e^{-t\cL} \bu_0\|_{\cX_T} \leq \Bar{C}\,e^T \max\big(1,T^\frac{s}{4}\big)
\|\bu_0\|_{L^2}.
\]
 Then, $\cT(\mathbf{0})=e^{-t\cL} \bu_0\in B(0,M)\subset \cX_T$ if:
\begin{equation} \label{e:BallSize}
   M> \Bar{C}\, h(T)\,\widetilde{M},
\end{equation}
where $h(T):=e^T\, \max\big(1,T^\frac{s}{4}\big)$.
Assume now that $\bu\in B(0,M)$, where $M$ satisfies this bound.
Choosing $\bv=\mathbf{0}$ in \eqref{e:LocalLipEst}, we have:
\[
  \begin{aligned}
   \|\cT(\bu)\|_{\cX_T}&\leq \|\cT(\bu)-\cT(\mathbf{0})\|_{\cX_T} +
\|\cT(\mathbf{0})
   \|_{\cX_T}\\
    &\leq C(T) \|\bu\|^2_{\cX_T} + \|e^{-t\cL} \bu_0\|_{\cX_T} \\
    &\leq \Bar{C}\, (g(T)\, M^2 + h(T)\,\widetilde{M})
  \end{aligned}
\]
It is easy to see then that $\cT(\bu) \in B(0,M)$ if $T$ is small enough; in
fact, more precisely if:
\[
     \Delta =:1-4 \Bar{C}^2 \widetilde{M}\, h(T) g(T)> 0,
\]
and $M$ is taken in the range:
\[
    0< \frac{1-\sqrt{\Delta}}{2\bar{C} g(T)} < M <
\frac{1+\sqrt{\Delta}}{2\bar{C} g(T)},
\]
which automatically gives \eqref{e:BallSize}. This condition also implies that
$\cT$ is a contraction in $B(0,M)$, since $M$ satisfies $M>\bar{C}\, g(T)
M^2$.
When $0<T\leq 1$, the condition on $\Delta$ reduces to:
\begin{equation} \label{e:ShortTimeCond}
    T< \left( \frac{1}{4 \Bar{C} \widetilde{M}}\right)^{\frac{4}{5-3s}},
\end{equation}
with the familiar inverse dependence of the time of existence on the size of the
initial data.

Applying Banach Contraction Mapping Theorem yields then a unique fixed point of
the map $\cT$ in $B(0,M)$. This fixed point is in fact the only fixed point in
$\cX_T$, since if there is another fixed point $\bu'$ in $B(0,M)'$, $M'>M$, then
$\bu=\bu'$ as $B(0,M)\subset B(0,M')$.
The proof of Theorem  \ref{t:ShortTimeExt} is complete.

Since $\bu\in H^s$, $1\leq s < 5/3$, on any interval of the form $[\delta,T)$,
$\delta>0$, we can bootstrap the regularity and conclude that for a short time
$\bu\in H^r$, $\forall r>0$, but the time of existence in $H^s$ may become
progressively shorter. Indeed, since $H^s$ is an algebra, one can repeat the
proof of Theorem \ref{t:ShortTimeExt} starting with initial data in $H^s$ to
gain regularity for $t>0$. In fact, existence and uniqueness can be more
directly obtained by ODE methods in Banach spaces if $\bu_0\in H^r$, $r>2$,
since then the non-linear term in 2DKS is bounded in $H^r$ for $\bu\in H^r$.

\section{No growing modes: global existence for small data in $L^2$}
\label{s:L2global}

In this section, we give another existence proof of global-in-time existence of
a mild solution for small data when there are no growing modes. The data is
taken small in $L^2(\TT^2)$ and with zero average. This last condition is
needed to ensure the validity of Poincar\'e's inequality on the torus and
ensures that the $L^2$ norm of the solution decays in time.

Global existence in $L^2$ for small data complements the result in the Wiener
algebra. While it is true that $\mathcal{B}_0\subset L^2$ on the torus, no
zero-average condition on the initial data is needed in $\mathcal{B}_0$, and
the proof yields a radius of analyticity that grows linearly for all time, while for
$L^2$ data we can only establish initial growth of order $t^{1/4}$ (see Section
\ref{s:analytic} below).
%Furthermore, the approach of Section \ref{s:wiener} may
%adapt to the case of the whole space $\RR^2$ to yield local-in-time existence
%for non-decaying data.

We reinstate the hypothesis that the periods $L_1, L_2\in (0,2\pi)$ to avoid
the existence of growing modes for the linear part of the equation. Under this
condition, following the notation of Section \ref{s:shortime},
$|\Tilde{\bk}|>1$ and $\inf_{\Tilde\bk \in\Tilde\ZZ^2\setminus \{0\}}
\sigma(\Tilde\bk)>0$. Hence the operator estimates
on $e^{-t\cL}$ are modified as follows:
\begin{align}
   &\|e^{-t\cL}f\|_{H^s}\leq
   \, \|f\|_{H^s}, \qquad  t\geq 0, \; s\in \RR,
   \label{e:SobolevBound1ngm} \\
      &\|e^{-t\cL}f\|_{\dot{H}^s} \leq  C\,t^{(r-s)/4}
    \|f\|_{\dot{H}^r},  \qquad  t> 0, \; s\geq r,
   \label{e:SobolevBound2ngm} \\
      &\|e^{-t \cL} f\|_{\dot{H}^s(\TT^2)} \leq C\, t^{-\frac{s+1}{4}}\,
     \|f\|_{L^1(\TT^2)} , \qquad  t>0, \; s>0,  \label{e:SemigroupBoundngm}
\end{align}
where $C$ depends on the periods, $s$ and $r$, but not on $t$ nor on $f$.

We will prove existence of a mild solution using an adapted space and again the
Contraction Mapping Theorem for small enough initial data with zero average.
This condition is preserved under the forward evolution in 2DKS, at least for
strong solutions.

\begin{lemma} \label{l:ZeroAverage}
 Let $\bu_0\in L^2(\TT^2)$, and let $\bu$ be a strong solution of 2DKS on
$(0,T)$ with initial data $\bu_0$. If $\bu_0$ has average zero over the torus,
then $\bu(t)$ has average zero over the torus for all $t>0$.
\end{lemma}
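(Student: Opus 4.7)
The plan is to show that the spatial average of $\bu$ is conserved under the evolution, which together with the hypothesis $\int_{\TT^2} \bu_0\,dx = 0$ yields the conclusion. I will work with the Duhamel representation of the mild formulation already used by the authors,
\[
    \bu(t) = e^{-t\cL}\bu_0 - \int_0^t e^{-(t-\tau)\cL}\nabla\left(\frac{|\bu|^2}{2}\right)\,d\tau,
\]
and read off the $\tbk = 0$ Fourier coefficient, which is exactly (a multiple of) the spatial average.

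Two elementary observations make this immediate. First, from the symbol \eqref{symbolOfLinearPart} we have $\sigma(0) = 0$, so the semigroup acts trivially on the zero mode: $\widehat{e^{-t\cL}\bu_0}(0) = e^{-t\sigma(0)}\hat{\bu}_0(0) = \hat{\bu}_0(0) = 0$ by assumption. Second, the nonlinear term is a pure gradient, and for any periodic $g$ one has $\widehat{\nabla g}(\tbk) = i\tbk\,\hat{g}(\tbk)$, which vanishes at $\tbk = 0$. Therefore the zero Fourier coefficient of $\nabla(|\bu|^2/2)(\tau)$ is zero for every $\tau$, and the Duhamel integral contributes nothing at $\tbk = 0$. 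Combining the two,
\[
    \hat{\bu}(0,t) = \hat{\bu}_0(0) = 0 \qquad \text{for all } t \in (0,T),
\]
which is exactly the zero-average condition on $\TT^2$.

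An equivalent route in physical space is to integrate the PDE directly: for a strong solution, differentiation under the integral is justified, and
\[
    \frac{d}{dt}\int_{\TT^2}\bu(x,t)\,dx = -\int_{\TT^2}\bigl(\Delta^2 + \Delta\bigr)\bu\,dx - \int_{\TT^2}\nabla\left(\frac{|\bu|^2}{2}\right)dx.
\]
Every term on the right is the integral of a divergence (or of a Laplacian of a derivative), and hence vanishes on the torus by periodicity. Thus $\int_{\TT^2}\bu(\cdot,t)\,dx$ is constant in time and equals zero by the initial condition.

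There is really no obstacle here: the only point requiring the strong-solution hypothesis is the legitimacy of pointwise evaluation of the Fourier transform of the nonlinearity (or equivalently of interchanging $d/dt$ with $\int_{\TT^2}$), which follows at once from the regularity provided by Theorem \ref{t:ShortTimeExt} for $t > 0$.
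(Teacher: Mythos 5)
Your proposal is correct. Your second, physical-space argument is exactly the paper's proof: differentiate $\int_{\TT^2}\bu\,d\bx$ in time, note that $\int_{\TT^2}(\Delta^2+\Delta)\bu\,d\bx=0$ by the divergence theorem and periodicity, and that the gradient nonlinearity integrates to zero by periodicity in each variable separately, so the average is conserved. Your leading Fourier/Duhamel argument is a genuinely worthwhile variant: reading off the $\tbk=\mathbf{0}$ coefficient of the mild formulation, using $\sigma(0)=0$ and $\widehat{\nabla g}(\mathbf{0})=0$, proves the same conservation directly at the level of mild solutions, so it does not actually need the strong-solution hypothesis (only $\bu(t)\in L^2$ so that $|\bu|^2\in L^1$ and its Fourier coefficients are defined); the paper instead invokes the strong-solution regularity to justify differentiating under the integral and applying the divergence theorem pointwise. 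The two arguments are two faces of the same cancellation, but your Fourier version is marginally more general, which is relevant since the lemma is applied in a mild-solution framework.
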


\begin{proof}
Since $\bu$ is a strong solution the equation is satisfied pointwise and all
terms are integrable in space over the torus $\TT^2$ and in time over $(0,T)$.
Consequently:
\[
  \frac{d}{dt} \int_{\TT^2}  \bu(t)\, d\bx = \int_{\TT^2} \pa_t \bu(t)\, d\bx  =
- \frac{1}{2} \int_{\TT^2} \nabla |\bu|^2(t)\,d\bx,
\]
where we used that, by the divergence theorem and periodicity,
\[
     \int_{\TT^2} \Delta^2 \bu(t)\, d\bx = \int_{\TT^2} \Delta \bu(t)
      \,d\bx =\mathbf{0}, \qquad \text{a.e. } t\in (0,T).
\]
Next, we note that, by periodicity again, integrating in each variable
separately, $\int_0^{L_2} \int_0^{L_1}\pa_x |u|^2(x,y,t)\,dx\,dy=0$ and
similarly for the $y$-derivative so that:
\[
         \frac{d}{dt} \int_{\TT^2}  \bu(t)\, d\bx =0.
\]
Above we have used that $\bu$ is a strong solution and hence it is continuous
on $\TT^2.$
\end{proof}

For notational convenience we will denote the subspace of functions in $L^2$
with zero average as:
\[
        \rL^2(\TT^2):= \left\{ f\in L^2(\TT^2)\;\mid\; \fint_{\TT^2} f\,
        d\bx=0\right\}
\]
By Poincar\'e's inequality, if a function $f$ has average zero over the torus,
then $\|f\|_{L^2} \leq C\, \|\nabla f\|_{L^2}$ so that $\Dot{H}^1\subset \rL^2.$

We next introduce the adapted space for the contraction mapping:
\begin{equation} \label{e:GlobalAdaptedSpeceDef}
   X_\infty := \{ \bu: \TT^2 \times [0,\infty)\to \RR^2 \; \mid\; \bu \in
L^\infty([0,+\infty);\rL^2), \sup_{0<t<\infty} t^{1/4}\,
\|\bu\|_{\Dot{H}^1} <\infty\},
\end{equation}
with norm
\[
    \|\bu\|_{X_\infty} := \max\big( \sup_{0<t<\infty} \|\bu(t)\|_{L^2},
\sup_{0<t<\infty} t^{1/4}\, \|\bu\|_{\Dot{H}^1}).
\]
By Lemma \ref{l:ZeroAverage}, $\cT(\bu)$ has average zero if $\bu$ does, where
$\cT_{\buzero}=\cT$ is again the non-linear map in Duhamel's representation.

We will also need the following elementary result.

\begin{lemma} \label{l:integral}
   Let $\alpha$, $\beta$, $\gamma$ be given non-negative numbers. If $\alpha<1$ and
$\alpha+\beta=1$, and $0<\beta+\gamma<1$, then there exists a positive
constant $C$ such that
\[
     \int_0^t \frac{1}{(t-\tau)^\alpha} \frac{1}{\tau^\beta}
     \frac{t^\gamma}{\tau^\gamma}\, d\tau < C,
\]
where $C$ may depend on $\alpha$, $\beta$, $\gamma$, but is independent of
$t\in [0,\infty)$.
\end{lemma}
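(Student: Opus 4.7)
The plan is to use a rescaling to eliminate the $t$-dependence in the integral and reduce matters to a standard Beta-function computation.

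First I would make the substitution $\tau = ts$ with $s \in [0,1]$, so that $d\tau = t\,ds$, $t-\tau = t(1-s)$, and $\tau = ts$. Substituting into the integrand gives
\[
   \int_0^t \frac{1}{(t-\tau)^\alpha}\,\frac{1}{\tau^\beta}\,\frac{t^\gamma}{\tau^\gamma}\, d\tau
   = t^{1-\alpha-\beta}\int_0^1 \frac{ds}{(1-s)^\alpha\, s^{\beta+\gamma}}.
\]
The hypothesis $\alpha+\beta=1$ makes the prefactor $t^{1-\alpha-\beta}=1$, which is precisely the point of the homogeneity assumption and is what allows the bound to be independent of $t$.

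It then remains to show the dimensionless integral $\int_0^1 (1-s)^{-\alpha} s^{-(\beta+\gamma)}\, ds$ is finite. This is the Beta function $B(1-\alpha,\,1-(\beta+\gamma))$, which converges precisely when $\alpha<1$ and $\beta+\gamma<1$, both of which are part of the hypotheses (the additional assumption $\beta+\gamma>0$ is not needed for convergence but fits the range of exponents that arise in the application). I would record the explicit value
\[
   C = B\bigl(1-\alpha,\,1-(\beta+\gamma)\bigr) = \frac{\Gamma(1-\alpha)\,\Gamma(1-(\beta+\gamma))}{\Gamma(2-\alpha-\beta-\gamma)},
\]
which depends only on $\alpha,\beta,\gamma$, completing the proof.

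There is no real obstacle here: the entire argument is driven by the scaling identity $\alpha+\beta=1$, and the two inequalities $\alpha<1$ and $\beta+\gamma<1$ are exactly the integrability conditions at the endpoints $s=1$ and $s=0$ respectively. If one wished to avoid invoking the Beta function by name, one could alternatively split the integral at $s=1/2$ and bound each piece by a convergent power-law integral, but the substitution argument is cleaner and makes the sharp constant explicit.
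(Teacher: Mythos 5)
Your proof is correct and is exactly the paper's argument: the authors likewise prove the lemma by the change of variables $\tau/t=\theta$, using $\alpha+\beta=1$ to kill the $t$-dependence and $\alpha<1$, $\beta+\gamma<1$ for integrability at the two endpoints. Your identification of the sharp constant as the Beta function $B(1-\alpha,1-(\beta+\gamma))$ is a harmless extra.
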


The Lemma is easilty proved by making the change of variable $\tau/t=\theta$.

\begin{theorem} \label{t:L2GlobalSmallData}
  Let $L_1,L_2\in (0,2\pi)$. There exists $\epsilon>0$ small enough such that,
if \, $\buzero\in \rL^2(\TT^2)$ and $\|\buzero\|_{L^2} \leq \epsilon$, then
the initial-value problem for 2DKS with initial data $\buzero$ has a
unique solution $\bu$ in $X_\infty$.
\end{theorem}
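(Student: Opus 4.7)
The plan is to run the Banach contraction mapping theorem for the operator $\cT$ on a closed ball in the subspace of $X_\infty$ consisting of functions with zero spatial average at each time. This subspace is preserved by $\cT$: the linear part $e^{-t\cL}\buzero$ preserves the zero Fourier mode since $\sigma(\mathbf{0}) = 0$, and the nonlinear term is a gradient whose spatial integral vanishes by periodicity (cf.\ Lemma \ref{l:ZeroAverage}). The decisive extra input available in this subspace is Poincar\'e's inequality $\|\bu(\tau)\|_{L^2} \leq C_P \|\bu(\tau)\|_{\dot{H}^1}$, which combines with the $t^{1/4}$-weighted $\dot{H}^1$ part of the $X_\infty$ norm to yield the pointwise decay $\|\bu(\tau)\|_{L^2} \leq C_P \|\bu\|_{X_\infty}\tau^{-1/4}$; this is the single ingredient that converts potentially time-growing Duhamel integrals into uniform-in-$t$ bounds.

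For the linear piece I would use \eqref{e:SobolevBound1ngm} to bound $\sup_t\|e^{-t\cL}\buzero\|_{L^2} \leq \|\buzero\|_{L^2}$, and \eqref{e:SobolevBound2ngm} with $r = 0$ and $s = 1$ to bound $\sup_t t^{1/4}\|e^{-t\cL}\buzero\|_{\dot{H}^1} \leq C\|\buzero\|_{L^2}$, altogether giving $\|e^{-t\cL}\buzero\|_{X_\infty} \leq C\|\buzero\|_{L^2}$. For the nonlinear piece I would work with the polarized form of the difference $\cT(\bu) - \cT(\bv)$, using the curl-free structure to write the nonlinearity as an exact gradient
\[
  \frac{1}{2}\nabla\bigl((\bu+\bv)\cdot(\bu-\bv)\bigr),
\]
so that $(\bu+\bv)(\tau)\cdot(\bu-\bv)(\tau)$ lies in $L^1(\TT^2)$ with norm bounded by $\|\bu+\bv\|_{L^2}\|\bu-\bv\|_{L^2}$. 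Moving the $\nabla$ onto the semigroup and applying \eqref{e:SemigroupBoundngm} with the index shifted by one yields $\|e^{-(t-\tau)\cL}\nabla g\|_{\dot{H}^s} \leq C(t-\tau)^{-(s+2)/4}\|g\|_{L^1}$ for $s\geq 0$.

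Plugging in the Poincar\'e-based bound on each factor gives $\|(\bu+\bv)(\tau)\|_{L^2}\|(\bu-\bv)(\tau)\|_{L^2} \leq C\tau^{-1/2}(\|\bu\|_{X_\infty}+\|\bv\|_{X_\infty})\|\bu-\bv\|_{X_\infty}$. The $L^2$ component of the Duhamel integral is then a multiple of $\int_0^t (t-\tau)^{-1/2}\tau^{-1/2}\,d\tau = B(1/2,1/2) = \pi$, which is independent of $t$. For the weighted $\dot{H}^1$ component one must control $t^{1/4}\int_0^t (t-\tau)^{-3/4}\tau^{-1/2}\,d\tau$; writing $\tau^{-1/2} = t^{-1/4}(t/\tau)^{1/4}\tau^{-1/4}$ puts the integrand into the form covered by Lemma \ref{l:integral} with $\alpha = 3/4$, $\beta = 1/4$, $\gamma = 1/4$ (so that $\alpha+\beta = 1$ and $\beta+\gamma = 1/2 \in (0,1)$), again giving a bound independent of $t$. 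Combining these pieces produces $\|\cT(\bu)-\cT(\bv)\|_{X_\infty} \leq C_\star(\|\bu\|_{X_\infty}+\|\bv\|_{X_\infty})\|\bu-\bv\|_{X_\infty}$, while $\|\cT(\mathbf{0})\|_{X_\infty} \leq C_\sharp\|\buzero\|_{L^2}$. Choosing $R = 2C_\sharp\epsilon$ and $\epsilon$ small enough, $\cT$ maps the ball $\overline{B_{X_\infty}(\mathbf{0},R)}$ intersected with the zero-average subspace into itself and is a strict contraction, so Banach's theorem furnishes the unique fixed point.

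The main obstacle is the uniform-in-time closure of these integral estimates. Absence of growing modes makes the semigroup $e^{-t\cL}$ merely bounded on $L^2$ in the operator bounds we use, not exponentially decaying, so no naive smallness argument controls $\|\bu\|_{L^\infty_tL^2_x}$ globally; the integrals would grow like $t^{1/2}$ or $t^{1/4}$. It is the interplay between zero-average (Poincar\'e), the built-in parabolic smoothing of the semigroup, and the precise power-matching afforded by Lemma \ref{l:integral} that allows the argument to close for all $t>0$; weakening any of these ingredients would only yield the local-in-time conclusion already given by Theorem \ref{t:ShortTimeExt}.
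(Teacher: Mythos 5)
Your proposal is correct and follows essentially the same route as the paper: a contraction argument for $\cT$ in $X_\infty$, with the trend bounded by \eqref{e:SobolevBound1ngm}--\eqref{e:SobolevBound2ngm}, the polarized nonlinearity handled through the $L^1\to\dot{H}^s$ smoothing bound \eqref{e:SemigroupBoundngm}, Poincar\'e's inequality on the zero-average subspace, and Lemma \ref{l:integral} with the identical exponents $(\alpha,\beta,\gamma)=(1/2,1/2,0)$ and $(3/4,1/4,1/4)$. The only point to add is that Banach's theorem gives uniqueness only in the ball $B(0,R)$, so to get uniqueness in all of $X_\infty$, as the theorem asserts, you should append the standard continuation/comparison argument the paper invokes.
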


\begin{proof}
 Let $\buzero\in \rL^2(\TT^2)$ be fixed. We first show that $\cT: X_\infty
\to X_\infty$ continuously. Throughout the proof, we employ the
standard shorthand  notation  $\,\lesssim\,$ to denote $\,\leq C\,$ with $C>0$
that is independent of $\buzero$, $\bu$, and $t$.

We decompose the map $\cT$ in its linear part, $e^{-t\cL} \buzero$, which we
call the trend as it is dominant for $t$ small for a mild solution, and the
non-linear part, $\int_0^t e^{-(t-\tau)\cL} \nabla (|\bu(\tau)|^2/2)\,d\tau$,
which we call the fluctuation.

From \eqref{e:SobolevBound1ngm}-\eqref{e:SobolevBound2ngm}, it follows
immediately that the trend belongs to $X_\infty$. We next bound the fluctuation.
We begin by considering the $L^2$ norm:
 \begin{align}
  \Bigg\| \int_0^t e^{-(t-\tau)\cL} &\nabla (|\bu(\tau)|^2/2)\,d\tau\Bigg\|_{L^2}
  \lesssim \int_0^t \frac{1}{(t-\tau)^{1/2}} \||\bu|^2(\tau))\|_{L^1} \, d\tau
    \nonumber \\
  &\lesssim \int_0^t \frac{1}{(t-\tau)^{1/2}} \|\bu(\tau))\|^2_{L^2} \, d\tau
   \nonumber \\
  &\lesssim \int_0^t \frac{1}{(t-\tau)^{1/2}} \|\bu(\tau))\|^2_{\Dot{H}^1}\,
    d\tau \nonumber\\
  & \lesssim \left(\int_0^t \frac{1}{(t-\tau)^{1/2}} \frac{1}{\tau^{1/4}}
     \frac{1}{\tau^{1/4}} \,d\tau\right) \|\bu\|^2_{X_\infty}
    \lesssim \|\bu\|^2_{X_\infty}, \label{e:L2FluctuationBound}
 \end{align}
where we have used \eqref{e:SemigroupBoundngm} with $s=1$, Poincar\'e's
inequality, and Lemma \ref{l:integral} with $\alpha=1/2$, $\beta=1/2$, and
$\gamma=0$.
We now estimate the $\Dot{H}^1$ norm of the fluctuation, that is the $L^2$ norm
of the gradient, in a similar fashion:
\begin{align}
   \Bigg\| \int_0^t e^{-(t-\tau)\cL} &\nabla^2 (|\bu(\tau)|^2/2)\,d\tau\Bigg\|_{L^2}
  \lesssim \int_0^t \frac{1}{(t-\tau)^{3/4}} \||\bu|^2(\tau))\|_{L^1} \, d\tau
    \nonumber \\
  &\lesssim \int_0^t \frac{1}{(t-\tau)^{3/4}} \|\bu(\tau))\|^2_{L^2} \, d\tau
   \nonumber \\
  &\lesssim \int_0^t \frac{1}{(t-\tau)^{3/4}} \|\bu(\tau))\|^2_{\Dot{H}^1}\,
    d\tau \nonumber\\
  & \lesssim \, \frac{1}{t^{1/4}} \left(\int_0^t \frac{1}{(t-\tau)^{3/4}}
    \frac{1}{\tau^{1/4}}  \frac{t^{1/4}}{\tau^{1/4}} \,d\tau\right)
   \|\bu\|^2_{X_\infty} \nonumber \\
  & \lesssim  \, \frac{1}{t^{1/4}}\|\bu\|^2_{X_\infty},
      \label{e:H1FluctuationBound}
\end{align}
where again we have used \eqref{e:SemigroupBoundngm} with $s=2$, Poincar\'e's
inequality, and Lemma \ref{l:integral} with $\alpha=3/4$, $\beta=1/4$, and
$\gamma=1/4$.

Combining \eqref{e:L2FluctuationBound} with \eqref{e:H1FluctuationBound} yields:
\begin{equation} \label{e:XinfinityMapBound}
    \|\cT(\bu)\|_{X^\infty} \leq A\, (\|\buzero\|_{X_\infty} +
    \|\bu\|^2_{X_\infty}),
\end{equation}
for some constant $A>0$, depending on $L_1$, and $L_2$ only.
This bound also implies a Lipschitz estimate on $\cT$ in $X_\infty$:
\begin{align}
  \|\cT(\bu)-&\cT(\bv)\|_{X_\infty}  =
    \frac{1}{2} \, \left\|  \int_0^t e^{-(t-\tau)\cL}  \big[\nabla (|\bu(\tau)|^2)
-\nabla     (|\bv(\tau)|^2)\big] \,d\tau \right\|_{X_\infty} \nonumber \\
   & =  \frac{1}{2} \, \left\|  \int_0^t e^{-(t-\tau)\cL}   \nabla \big[ \bu(\tau))
     \cdot  (\bu(\tau)- \bv(\tau)) + \bv(\tau) \cdot (\bu(\tau)-
    \bv(\tau))\big] \,d\tau \right\|_{X_\infty} \nonumber \\
   & \leq A\, (\|\bu\|_{X_\infty} + \|\bv\|_{X_\infty} )
     \|\bu-\bv\|_{X_\infty}, \label{e:LipschitzXinfinityBound}
\end{align}
where we have proceeded as in
\eqref{e:L2FluctuationBound}-\eqref{e:H1FluctuationBound}
for the last inequality and $A$ is the constant in \eqref{e:XinfinityMapBound}.

We set $\Tilde{M}:= \|\buzero\|_{L^2}$, and consider $\cT$ as a map on the ball
$B(0,M)\subset X_\infty$ with $M$ to be determined later. From the estimates
above on $\cT$, we have that $\cT: B(0,M)\to B(0,M)$ if $C \Tilde{M} + A
M^2<M$, which can be arranged by choosing, for instance, $ M= 2 C \Tilde{M}$
and $\Tilde{M} <\frac{1}{4 A C}$. Under this condition on $\Tilde{M}$, $\cT$ is
also automatically a contraction on $B(0,M)$, since
\[
    \|\cT(\bu)-\cT(\bv)\|_{X_\infty} \leq 2A M \,\|\bu-\bv\|_{X_\infty},
    \qquad \bu,\,\bv\in B(0,M).
\]
Then, by the Contraction Mapping Theorem, there is a unique fixed point $\bu$
of the map $\cT$ in $B(0,M)$. by a standard continuation argument, the solution
is unique in $X_\infty$.
\end{proof}

%\END

\section{The general case: The radius of analyticity} \label{s:analytic}

In this section, we study the analyticity of mild solutions of 2DKS for $t>0$,
and obtain an lower bound for the radius of analyticity as a function of time
and the $L^2$-norm of the initial data.

We recall from Section \ref{s:wiener} that the mild solution with initial data
in the Wiener algebra extends as an analytic function on a strip of width
growing linearly in time (Remark \ref{firstAnalyticRemark}).

For the general case, we follow the approach in \cite{GK98}, which treats the
1DKS equation in $\RR$ with $L^\infty$ data and the Navier-Stokes equations.
Our proof is very similar, except that we do not restrict to short time and
take the exponential growth of the linear part into consideration.
As remarked in \cite{GK98}, this approach lends itself well to study the
dependence on other $L^p$ norms, but we do not pursue this extension here.

Analyticity for the 2DKS has also been studied using Gevrey classes  \cite{BS07}
and growth of higher Sobolev norms \cite{SSArXiv07} (see also \cite{IS16} for a
spectral approach to related models).

We henceforth fix an initial data $\mathbf{u}_0\in L^2(\TT^2)$, with average
zero, where again no assumptions are made on the periods $L_1$ and $L_2$, and a
time $0<T<\infty$ to be determined later. We will denote the point $(x,y)\in
\RR^2$ or $\TT^2$ also by $\bx$ for notational convenience

The main result of this section is
the following theorem.

\begin{theorem} \label{t:analytic}
  Let $\bu_0\in L^2(\TT^2)$ with zero average. There exists a constant $C>0$,
independent of
$\bu_0$ such that, if $0<T<\infty$ satisfies
\[
     e^T\,\max\big( T, T^{1/2}\big) <\frac{1}{2C\,\|\bu_0\|_{L^2(\TT^2)}},
\]
then there exists a unique mild solution $\bu$ of 2DKS on $[0,T)$ with initial
data $\bu_0$ such that $\bu$ at time $t\in (0,T)$ extends as an analytic
function in the strip
\[
     \mathcal{D}_t : = \{ (\bx,\by)\in \CC^2; \; 2C|\by|< e^{-t}
\min(1,t^{1/4})\},
\]
and satisfies the bound
\[
     \|\bu(\cdot,\by,t)\|_{L^2(\TT^2)}\leq C\,\|\bu_0\|_{L^2(\TT^2)}, \quad
(\bx,\by)\in
\mathcal{D}_t,
\]
for all $0<t<T$.
\end{theorem}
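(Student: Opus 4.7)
The plan is to adapt the Gruji\'c--Kukavica strategy: work with a complex-translated version of the Duhamel formula, run a contraction mapping on a Banach space of $L^2(\TT^2)$-valued functions of a complex translation parameter, and then recover the analyticity of the real mild solution from Theorem \ref{t:ShortTimeExt} by a normal-families argument. Concretely, for each $t \in (0,T)$ and each $\by$ in the strip $\mathcal{D}_t$ (viewed as a vector of imaginary translations), set $\bv(\bx,\by,t) := \bu(\bx + i\by, t)$. Because $\cL$ has constant Fourier symbol and the nonlinearity is polynomial, the Duhamel map $\cT$ from Section \ref{s:shortime} commutes with complex spatial translations, so $\bv$ satisfies a Duhamel equation of the same form with initial data $\bu_0(\cdot + i\by)$.

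The technical core is a complex-translated analogue of the semigroup estimate \eqref{e:SemigroupBound}: for $f \in L^1(\TT^2)$ and $|\by| \leq c\,e^{-t}\min(1, t^{1/4})$, one proves
\[
     \|e^{-t\cL} f(\cdot + i\by)\|_{\dot{H}^s(\TT^2)} \lesssim e^{t}\, \max(1, t^{-(s+1)/4})\, \|f\|_{L^1(\TT^2)}.
\]
On the Fourier side, the complex translation multiplies $\widehat{f}(\tbk)$ by $e^{-\tbk\cdot\by}$, so one must bound $\sup_{\tbk}|\tbk|^{s} e^{-t\sigma(\tbk) + |\tbk||\by|}$. The finitely many frequencies with $\sigma(\tbk) \leq 0$ produce the $e^{t}$ factor exactly as in Section \ref{s:shortime}; for the high frequencies, where $\sigma(\tbk) \geq \beta|\tbk|^4$, an elementary optimization gives $\sup_{|\tbk|\geq 1}\{|\tbk||\by| - \beta t|\tbk|^4\} \lesssim |\by|^{4/3} t^{-1/3}$, which is uniformly bounded as long as $|\by| \lesssim t^{1/4}$. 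This is precisely the scaling encoded in the definition of $\mathcal{D}_t$.

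With these estimates in hand, work in the Banach space
\[
     Y_T := \Bigl\{\bv : \sup_{0<t<T}\sup_{\by\in \mathcal{D}_t}\|\bv(\cdot,\by,t)\|_{L^2(\TT^2)} < \infty\Bigr\},
\]
and set up a contraction on a ball of radius $\sim \|\bu_0\|_{L^2}$ centered at $e^{-t\cL}\bu_0(\cdot + i\by)$. The nonlinear Lipschitz estimate mimics \eqref{e:L2LipEst}: use the translated semigroup bound with $s=1$ together with the trivial product estimate $\||\bv|\,|\bw|\|_{L^1} \leq \|\bv\|_{L^2}\|\bw\|_{L^2}$, which is insensitive to the translation. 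The emerging Lipschitz constant is of order $e^T \max(T, T^{1/2})$, so the hypothesis on $T$ is exactly the smallness condition that makes the contraction close in a ball of radius comparable to $\|\bu_0\|_{L^2}$, yielding the pointwise $L^2$ bound claimed in the conclusion.

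The main obstacle is to identify the abstract $Y_T$-fixed point as a genuine \emph{analytic} extension of the real $L^2$ mild solution, rather than just a family of $L^2$ functions indexed by $\by$. This is where a Montel-type theorem for normal families of $L^2(\TT^2)$-valued holomorphic functions of several complex variables enters: approximate $\bu_0$ by a sequence $\bu_0^{(n)}$ of analytic data (e.g.\ partial Fourier sums), whose corresponding $Y_T$-fixed points are bona fide $\CC^2$-analytic extensions by classical arguments; the uniform $Y_T$-bound provides pre-compactness in the compact-open topology; the limit is then holomorphic on $\mathcal{D}_t$ for each $t$ and, by the uniqueness provided by Theorem \ref{t:ShortTimeExt}, its restriction to $\by = 0$ coincides with $\bu$.
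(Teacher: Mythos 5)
Your overall strategy (a complexified Duhamel fixed point in a space of $L^2$-valued functions of the imaginary shift, plus a Montel/approximation argument to identify the fixed point with an analytic extension of the real mild solution) is a legitimate variant of the Gruji\'c--Kukavica scheme, and your shifted semigroup bound is correct: on the Fourier side one indeed controls $\sup_{\tbk}|\tbk|^{s}e^{-t\sigma(\tbk)+|\tbk||\by|}$ as long as $|\by|\lesssim t^{1/4}$ (with the extra $e^{t}$ from the finitely many unstable modes). The difficulty is not there.

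The genuine gap is in the contraction estimate in $Y_T$, i.e. exactly the step you dismiss by saying the product estimate ``is insensitive to the translation.'' Fix $t\in(0,T)$ and $\by\in\mathcal{D}_t$ with $|\by|$ comparable to the width $e^{-t}\min(1,t^{1/4})$, and look at the Duhamel integrand at time $\tau<t$. You have only two options as written, and both fail. (i) If you translate the nonlinearity by $i\by$, you need $\|\bv(\cdot,\by,\tau)\|_{L^2}\|\bw(\cdot,\by,\tau)\|_{L^2}$; but the $Y_T$-norm controls these only for $\by\in\mathcal{D}_\tau$, and since the width $e^{-\tau}\min(1,\tau^{1/4})$ shrinks to $0$ as $\tau\to0$ (and is monotone increasing for small times), your $\by$ lies outside $\mathcal{D}_\tau$ for all $\tau$ appreciably smaller than $t$, so the $Y_T$-norm gives no control at all. (ii) If instead you keep the nonlinearity untranslated and ask $e^{-(t-\tau)\cL}$ to carry the whole shift, your own constraint $|\by|\lesssim (t-\tau)^{1/4}$ is violated as $\tau\to t^{-}$, so the shifted semigroup bound is inapplicable near the upper endpoint. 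Hence the claimed Lipschitz constant $e^{T}\max(T,T^{1/2})$ is not justified by the argument given. The missing idea is a splitting of the shift, $\by=\by_1+\by_2$ with $\by_2\in\mathcal{D}_\tau$ and $|\by_1|\lesssim(t-\tau)^{1/4}$, which is available because $t\mapsto e^{-t}\min(1,t^{1/4})$ is subadditive; this (or an equivalent device) must be built into the estimate. The paper sidesteps the issue altogether by using a time-proportional shift: setting $\Un(\bx,t)=\un(\bx,\balpha t,t)$ and invoking the Cauchy--Riemann equations turns the shift into the drift term $\balpha\cdot\nabla$ in \eqref{e:Uneq}, so at time $\tau$ the iterate is only ever evaluated at the shift $\balpha\tau$, which is automatically admissible at time $\tau$, and the semigroup over $t-\tau$ only has to produce the increment $\balpha(t-\tau)$; the paper also works with a Picard hierarchy of \emph{linear} problems whose iterates are entire, so analyticity of each approximation is free and only uniform bounds plus Montel are needed. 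Relatedly, your final identification step asserts that for analytic approximating data the $Y_T$-fixed points ``are bona fide $\CC^2$-analytic extensions by classical arguments''; this also requires proof (e.g. that the complexified Duhamel map preserves holomorphy in $(\bx,\by)$ and that the Picard limits converge locally uniformly), together with an existence time and bounds uniform in the approximation parameter -- an omission, though a repairable one, unlike the contraction step, which needs the splitting idea to go through at all.
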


We begin by constructing a suitable regular Picard iteration that will be shown
to converge to a mild solution of 2DKS. To this effect, for $n \in \NN$, let
$u^{(n)}$ denote the unique (strong) solution of the following problem:
\begin{equation} \label{e:uneq}
 \begin{cases}
   \pa_t \un +\cL \un = \nabla \big (\frac{ | \mathbf{u}^{(n-1)}|^2}{2}\big), &
\bx\in \TT^2, \; 0<t<T; \\
   \un\lfloor_{t=0}= \buzero,  & \bx\in \TT^2,
  \end{cases}
\end{equation}
where $\un$ is built recursively from $\bu^{(0)} \equiv 0$ and $\cL$ is again
the operator $\Delta^2 +\Delta$.

This solution exists in $L^\infty([0,T); L^2(\TT^2))$ for any $T$ by standard
ODE theory in Banach spaces, and it is an analytic function for $0<t<T$, given
that $\cL$ generates an analytic semigroup (not of contractions) in $L^2(\TT^2)$
on functions with average zero. This can be inferred from the smoothing
properties of $e^{-t\cL}$ or directly from the symbol of the operator
$e^{-t\cL}$ for $t>0$.
In particular, $\un$ can be extended to an entire function $\un(\bx,\by,t) + i\,
\vn(\bx,\by,t)$ on
$\RR^4\equiv \CC^2$ that is periodic in $\bx$ for each $0<t<T$. This function
therefore satisfies an analog equation  to \eqref{e:uneq}. Writing separately
the equation for its real and imaginary parts, we obtain a hierarchy of coupled
linear systems for $\un$ and $\vn$:
\begin{equation} \label{e:complexuneq}
  \begin{aligned}
      \pa_t \un +\cL \un &= \nabla \big (\frac{ | \bu^{(n-1)}|^2}{2}\big) +
\nabla \big
      (\frac{ | \bv^{(n-1)}|^2}{2}\big), \\
       \pa_t \vn +\cL \vn &=  -  \nabla \big
      (\bu^{(n-1)} \cdot \bv^{(n-1)}\big),
  \end{aligned}
\end{equation}
supplemented by the initial conditions  \ $\un(\bx,\by,0) =\bu_0(\bx)$ and
$\vn(\bx,\by,0)=0$.

We will show that the sequence $\{\un + i\, \vn\}$ of entire functions so
constructed is uniformly bounded in $L^\infty([0,T),L^2(\TT^2))$ provided $T$ is
sufficiently small, and deduce from this uniform bound that this sequence and
all its derivatives form a Cauchy sequence in $L^\infty([0,T),L^2(\TT^2))$,
therefore converging to an analytic function, which must be a classical solution
of 2DKS.

To determine the strip in $\CC^2$ where these uniform bounds hold, we introduce
the auxiliary functions:
\[
  \Un (\bx,t) := \un(\bx, \balpha t, t), \qquad  \Vn (\bx,t) :=
   \vn(\bx, \balpha t, t),
\]
where $\balpha\in \RR^2$ is a vector parameter, $\bx\in \TT^2$, $0\leq t<T$.
Note that $\Un(\bx,\mathbf{0},t)=\un(\bx,t)$ and $\Vn(\bx,\mathbf{0},t)=0$.
From the Cauchy-Riemann equations, which are satisfied coordinate-wise by the
functions $\un + i\, \vn$, one obtains the following coupled system for $\Un$
and $\Vn$:
\begin{equation} \label{e:Uneq}
 \begin{aligned}
      \pa_t \Un +\cL \Un &= - \balpha \cdot \nabla \Vn + \nabla \big (\frac{ |
\bU^{(n-1)}|^2}{2}\big) +  \nabla \big
      (\frac{ | \bV^{(n-1)}|^2}{2}\big), \\
       \pa_t \Vn +\cL \Vn &=   \balpha \cdot \nabla \Un - \nabla
       \big  (\bU^{(n-1)} \cdot \bV^{(n-1)}\big).
  \end{aligned}
\end{equation}
The solution of this system can obtained via Duhamel's formula as the fixed
point of the following integral equation:
\begin{equation} \label{e:UneqInt}
 \begin{aligned}
    \Un(t) &= e^{-t\cL} \bu_0  -\balpha \cdot \int_0^t e^{(t-\tau) \cL} \nabla
\Vn(\tau)\, d\tau+
    \int_0^t e^{(t-\tau) \cL} \big[\nabla \big (\frac{ | \bU^{(n-1)}|^2}{2}\big)
+ \\
     & \qquad \qquad \qquad +  \nabla \big
    (\frac{ | \bV^{(n-1)}|^2}{2}\big)(\tau)\big ]\, d\tau, \\
    \Vn(t) &= \balpha \cdot \int_0^t e^{(t-\tau) \cL} \nabla \Un(\tau)\, d\tau -
    \int_0^t e^{(t-\tau) \cL} \big[  \nabla \big
      (\bU^{(n-1)} \cdot \bV^{(n-1)})(\tau) \big]\, d\tau.
  \end{aligned}
\end{equation}
We will employ the above system and the operator estimates derived in Section
\ref{s:shortime} to obtain uniform bounds in $n$ on $\Un$ and $\Vn$. More
specifically, we utilize \eqref{e:SobolevBound2} with $s=1$ and $r=0$ and
\eqref{e:SemigroupBound} with $s=1$ to derive the following:
\begin{align}
  \|\Un&\|_{L^\infty([0,T);L^2(\TT^2))} \leq C\, e^T\, \big(
\|\bu_0\|_{L^2(\TT^2)} +
  | \balpha|\, (T+T^{3/4}) \, \|\Vn\|_{L^\infty([0,T);L^2(\TT^2))} \nonumber \\
   &  + (T+T^{1/2}) \,
    \|(\bU^{(n-1)})^2\|_{L^\infty([0,T);L^2(\TT^2))} \|(\bV^{(n-1)})^2\|
    _{L^\infty([0,T);L^2(\TT^2))}\big), \nonumber\\
   \|\Vn&\|_{L^\infty([0,T);L^2(\TT^2))} \leq C\, e^T\, \big(
\|\bu_0\|_{L^2(\TT^2)} +
  | \balpha|\, (T+T^{3/4}) \,  \|\Un\|_{L^\infty([0,T);L^2(\TT^2))}\nonumber \\
   & + (T+T^{1/2}) \,
    \|\bU^{(n-1)}\|_{L^\infty([0,T);L^2(\TT^2))}
\|\bV^{(n-1)}\|_{L^\infty([0,T);L^2(\TT^2))}\big).\label{e:UnboundFirst}
\end{align}

As in Section \ref{s:shortime}, we let $\tilde{M}:= \|\bu_0\|_{L^2(\TT^2)}$.

Let $g(T)$ again be the function introduced in \eqref{e:LocalLipEst}, where we
take $s=1$.
%This function is strictly increasing, and hence invertible on $[0,\infty)$.
Assume now that, given $T$, $\balpha$ satisfies
\begin{equation} \label{e:FirstCond}
    |\balpha|\leq  \frac{1}{2Cg(T)}.
\end{equation}
Then we can absorb  all terms at level $n$ on the left-hand side in
\eqref{e:UnboundFirst}, giving:
\[
  \begin{aligned}
   \|\Un&\|_{L^\infty([0,T);L^2(\TT^2))}  + \|\Vn\|_{L^\infty([0,T);L^2(\TT^2))}
    \leq C\, e^T\, \big[\|\bu_0\|_{L^2(\TT^2)}    \\
    &  +   (T+T^{1/2})\big(
    \|(\bU^{(n-1)})\|_{L^\infty([0,T);L^2(\TT^2))} + \|(\bV^{(n-1)})\|
    _{L^\infty([0,T);L^2(\TT^2))}\big)^2 \big].
  \end{aligned}
\]
By induction on $n$, it follows that
\begin{equation*} %\label{e:UVunifbound}
   \|\Un\|_{L^\infty([0,T);L^2(\TT^2))}  + \|\Vn\|_{L^\infty([0,T);L^2(\TT^2))}
    \leq C\,\tilde{M},
\end{equation*}
provided $T$ satisfies
\begin{equation} \label{e:SecondCond}
   0<T\leq  \tilde{g}^{-1}\big(\frac{1}{2C\tilde{M}}\big),
\end{equation}
where the function $\tilde{g}$ is given by:
\[
     \tilde{g}(t)= e^t \begin{cases}  t, & t\geq 1, \\
                        t^{1/2}, & 0<t<1.
                       \end{cases}
\]
Setting $\by=\balpha t$, $0\leq t\leq T$, gives the following {\em a priori}
bound on the complex-valued solution.

\begin{lemma}
  Let $0<T<\infty$ satisfy condition \eqref{e:SecondCond} and let
\[
    |\by|\leq  \frac{T}{2 C\,g(T)}.
\]
Then, for all $n\in \NN$:
\begin{equation} \label{e:UVunifbound}
     \|\un(\cdot,\by,t)\|_{L^\infty([0,T);L^2(\TT^2))}  +
\|\vn(\cdot,\by,t)\|_{L^\infty([0,T);L^2(\TT^2))}
    \leq C\,\tilde{M}.
\end{equation}

\end{lemma}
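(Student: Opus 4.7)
The plan is to carry out the uniform--in--$n$ estimate that the authors have already set up in \eqref{e:Uneq}--\eqref{e:UnboundFirst}, close it by induction, and then transfer the conclusion from the auxiliary pair $(\Un,\Vn)$ back to $(\un,\vn)$ by specializing the parameter $\balpha$. First I would fix $\balpha\in\RR^2$ with $|\balpha|\le 1/(2Cg(T))$ and, for each $n$, form the auxiliary functions $\Un(\bx,t)=\un(\bx,\balpha t,t)$, $\Vn(\bx,t)=\vn(\bx,\balpha t,t)$. Because $\un+i\vn$ is entire and periodic in $\bx$, $(\Un,\Vn)$ satisfy the Duhamel system \eqref{e:UneqInt}. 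Taking $L^2(\TT^2)$ norms in each line, applying the triangle inequality, the semigroup estimate \eqref{e:SobolevBound2} (with $s=1$, $r=0$) on the $\balpha\cdot\nabla$ terms, and \eqref{e:SemigroupBound} (with $s=1$) on the quadratic terms — using $\|\mathbf{w}^2\|_{L^1}\le\|\mathbf{w}\|_{L^2}^2$ to bring $\bU^{(n-1)}\cdot\bU^{(n-1)}$ etc.\ into $L^1$ — and finally taking $\sup_{t\in[0,T)}$, reproduces the pair of inequalities \eqref{e:UnboundFirst}.

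Next I would exploit \eqref{e:FirstCond} to absorb the linear $\Un$, $\Vn$ terms on the right of \eqref{e:UnboundFirst} into the left. Adding the two inequalities and using that $g(T)$ dominates $e^T(T+T^{3/4})$ up to a constant, this produces a single recursion
\[
   \|\Un\|_{L^\infty([0,T);L^2)}+\|\Vn\|_{L^\infty([0,T);L^2)}
   \le C_1 e^T\tilde M + C_2 e^T(T+T^{1/2})\bigl(\|\bU^{(n-1)}\|_{L^\infty L^2}+\|\bV^{(n-1)}\|_{L^\infty L^2}\bigr)^{2}.
\]
I would then induct on $n$: the base case $n=0$ is trivial since $\bU^{(0)}=\bV^{(0)}=0$. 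For the inductive step, assuming $\|\bU^{(n-1)}\|_{L^\infty L^2}+\|\bV^{(n-1)}\|_{L^\infty L^2}\le K\tilde M$, the recursion reproduces the same bound at level $n$ provided the quadratic inequality $C_1 e^T + C_2 e^T(T+T^{1/2})K^2\tilde M \le K$ admits a solution $K>0$. This is exactly the positivity of the discriminant $\Delta=1-4\bar C^{2}\tilde M\,h(T)g(T)>0$ already recorded in the authors' short-time analysis, which in the relevant regime is equivalent to \eqref{e:SecondCond} with $\tilde g(T)=e^T\max(T,T^{1/2})$. Choosing $K$ in the stable range closes the induction and yields the uniform bound $\|\Un\|_{L^\infty L^2}+\|\Vn\|_{L^\infty L^2}\le C\tilde M$.

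Finally, to transfer the bound back to $(\un,\vn)$, I note that as $\balpha$ ranges over the ball $|\balpha|\le 1/(2Cg(T))$ in $\RR^2$ and $t$ ranges over $[0,T)$, the point $\by=\balpha t$ sweeps out precisely the set $\{|\by|\le t/(2Cg(T))\}$, which is contained in $\{|\by|\le T/(2Cg(T))\}$. For a fixed admissible $\by$, choosing the corresponding $\balpha$ on each allowed time slice, the $n$-uniform bound on $(\Un(\cdot,t),\Vn(\cdot,t))$ becomes the $n$-uniform bound on $(\un(\cdot,\by,t),\vn(\cdot,\by,t))$, which is \eqref{e:UVunifbound}.

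The main obstacle is the induction closure: one must verify that the quadratic recursion can be iterated with a constant $K$ that is uniform in $n$, and that the required positivity of the discriminant reduces precisely to \eqref{e:SecondCond} rather than a strictly stronger smallness condition on $T$. All other ingredients — the Cauchy--Riemann derivation of \eqref{e:Uneq}, the Duhamel form \eqref{e:UneqInt}, and the semigroup bounds \eqref{e:SobolevBound2} and \eqref{e:SemigroupBound} — are already at hand, so the argument is essentially a careful bookkeeping of constants along a fixed-point--style iteration, tracking how the two a priori conditions \eqref{e:FirstCond} and \eqref{e:SecondCond} combine to yield the stated analyticity strip width $T/(2Cg(T))$.
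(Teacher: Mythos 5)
Your proposal follows essentially the same route as the paper: the same auxiliary functions $\Un,\Vn$ and Duhamel system \eqref{e:UneqInt}, the same semigroup estimates \eqref{e:SobolevBound2} and \eqref{e:SemigroupBound} leading to \eqref{e:UnboundFirst}, absorption of the $\balpha\cdot\nabla$ terms under \eqref{e:FirstCond}, induction in $n$ closed under \eqref{e:SecondCond}, and the final substitution $\by=\balpha t$. Your explicit quadratic-in-$K$ closure of the induction and your remark that the bound is really obtained on the time-dependent slices $|\by|\leq t/(2Cg(T))$ simply make explicit what the paper asserts, so the argument is correct and matches the paper's proof.
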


From the Lemma it follows that there exists $\Bar\bu\in L^2(\TT^2)$ such
that $\un(\cdot, \mathbf{0}, t)$ converges to $\Bar\bu$ in $L^2(\TT^2)$
 for $0\leq t\leq T$ and $\Bar\bu$ is a mild solution of 2DKS. In fact, using
\eqref{e:UVunifbound} in
\eqref{e:complexuneq} with $\balpha=\mathbf{0}$ gives:
\[
 \begin{aligned}
   \|\un - \bu^{(n-1)}&\|_{L^\infty([0,T);L^2(\TT^2))}  \leq
   C\, \tilde{g}(T) \big(  \|\bu^{(n-1)}\|_{L^\infty([0,T);L^2(\TT^2))} + \\
   \qquad &
    \|\bu^{(n-2)}\|_{L^\infty([0,T);L^2(\TT^2))} \big) \,
    \|\bu^{(n-1)} - \bu^{(n-2)}\|_{L^\infty([0,T);L^2(\TT^2))}\\
   \qquad & \leq  \tilde{C}\,  \|\bu^{(n-1)} -
\bu^{(n-2)}\|_{L^\infty([0,T);L^2(\TT^2))},
 \end{aligned}
\]
with $0<\tilde{C}<1$, and then we can apply the Contraction Mapping theorem.

Next we show that $\Bar\bu$ is in fact a classical solution. To do so, following
\cite{GK98}, we call $\mathcal{D}$ the set of all points $(\bx,\by,t)$, such
that $t \in (0,T)$, where $T$ satisfies \eqref{e:SecondCond}, $\bx\in \RR^2$,
and
$|\by|=|\balpha| t$ with $\balpha$ satisfying \eqref{e:FirstCond}. We observe
that, for fixed $t\in (0,T)$, the set $\mathcal{D}_t:= \{(\bx,\by)\in \CC^2; \;
(\bx,\by,t)\in \mathcal{D}\}$
is open in $\CC^2$.
Then, one derives from \eqref{e:UVunifbound} the following estimate on the
sequence $\{\un\}$:
\begin{equation}
   \sup_{0<t<T} \int_{|\by|<|\balpha| t} \int_{\TT^2} |\bu^{(n)}(\bx,\by,t)|^2\,
dy\,dy \leq M', \qquad \forall n\in \NN,
\end{equation}
for some constant $M'$, which depends on $T$, but not on $n$. Since $\un$ is
holomorphic in $(\bx,\by)\in \mathcal{D}_t$ for each $0<t<T$, the family
$\{\un\}$ is locally uniformly bounded there, and hence it is a normal family by
Montel's Theorem in several complex variables. This means that every derivative
of $\un$ in $\bx$ and $\by$ are also uniformly bounded in $n$  on compact
subsets of $\mathcal{D}$. Then, from \eqref{e:complexuneq}, we also conclude
that all time deriatives of $\un$ are uniformly bounded in $n$ on the same sets.
Therefore, by a diagonal argument, there exists $\bu$, $\bv\in
C^\infty(\mathcal{D})$ and a subsequence $\{\bu^{(n_k)},\, \bv^{(n_k)})\}$ such
that
$$
 \begin{aligned}
   \pa^\gamma_t \pa^\beta_\bx
   \bu^{(n_k)}&\underset{k\to\infty}{\longrightarrow} \bu,\\
   \pa^\gamma_t \pa^\beta_\bx
   \bv^{(n_k)}&\underset{k\to\infty}{\longrightarrow} \bv,
 \end{aligned}
$$
for every index $\gamma$ and multi-index $\beta$, uniformly on compact subsets
of $\mathcal{D}$, by a diagonal argument. We remark that since $\un$ is
periodic in $\bx$, $\bu$ is periodic as well and, hence it can be identified
with a function on $\TT^2$ for fixed $\by,t$. Furthermore, $\bu$ satisfies the
same bound as in \eqref{e:UVunifbound} by lower semicontinuity.

By uniqueness of the limit, then, $\Bar\bu\equiv \bu$ in
$C^\infty(\mathcal{D})$, and hence $\Bar\bu$ is a classical solution of 2DKS
on $(0,T)$ in view of the ``weak=strong'' uniqueness, which holds for the
equation by standard results on mild solutions (see e.g. \cite{Pazy}). Lastly,
$\Bar\bu$ is analytic in $\mathcal{D}$, and in fact
$\Bar\bu(\bx,\by,t)) +i \bv(\bx,\by,t)$ is the analytic extension of
$\Bar\bu(\bx,\by,t)$ to
the strip $|\by|<|\balpha| t$ in $\CC^2$.

As already observed in \cite{SSArXiv07}, the size of the $L^2$ norm controls
the growth of higher Sobolev norm leading to a continuation/blow up criterion
for 2DKS.

\begin{proposition} \label{p:blowup}
 Let $0<T<\infty$ and let $\bu$ be a mild solution of 2DKS with initial data
$\bu_0\in L^2$ on $[0,T]$. Then, if
\begin{equation} \label{e:blowup}
    \limsup_{t\to T_-} \|\bu(t)\|_{L^2(\TT^2)} < \infty,
\end{equation}
the solution can be continued to $[0,T']$ for some $T'>T$ and it is a classical
solution on $(0,T')$.
\end{proposition}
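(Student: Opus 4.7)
\bigskip
\noindent\textbf{Proof proposal for Proposition \ref{p:blowup}.}
The plan is to use Theorem \ref{t:ShortTimeExt} as a local existence engine, restarted from a time $t_0$ close to $T$, and to exploit the fact that the lifetime produced by that theorem depends monotonically on the $L^2$ norm of the initial data. Concretely, set $M := \limsup_{t\to T_-} \|\bu(t)\|_{L^2(\TT^2)} < \infty$ and fix any $M_1 > M$. By definition of $\limsup$, there exists $\eta>0$ such that $\|\bu(t)\|_{L^2(\TT^2)} \leq M_1$ for all $t\in(T-\eta, T)$. Inspecting the short-time existence condition \eqref{e:ShortTimeCond} (or, more generally, the threshold determined by the inequality $\Delta > 0$ just before it), the lifetime $\tau(\bv_0)$ guaranteed by Theorem \ref{t:ShortTimeExt} for data $\bv_0\in L^2$ satisfies $\tau(\bv_0) \geq \tau_* > 0$ whenever $\|\bv_0\|_{L^2}\leq M_1$, where $\tau_*$ depends only on $M_1$, $L_1$, $L_2$ and (the fixed) $s\in[1,5/3)$.

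Now choose $t_0 \in (T-\eta, T)$ with $T - t_0 < \tau_*/2$. Applying Theorem \ref{t:ShortTimeExt} to the initial value problem with data $\bu(t_0)$, we obtain a unique mild solution $\widetilde\bu \in C([0,\tau_*); L^2)$. Define $\bu'$ on $[0, t_0+\tau_*)$ by gluing: $\bu'(t) = \bu(t)$ for $t\in[0,t_0]$ and $\bu'(t) = \widetilde\bu(t-t_0)$ for $t\in[t_0, t_0+\tau_*)$. By uniqueness of mild solutions in $L^2$ (the fixed-point argument in the space $\cX_T$ of Section \ref{s:shortime}), $\bu'$ coincides with $\bu$ on their common interval of definition inside $[0,T)$, so $\bu'$ is a genuine continuation, and with $T' := t_0 + \tau_* > T$ we have extended the solution to $[0,T']$.

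It remains to verify that the extended $\bu'$ is classical on $(0,T')$. For this I would invoke the smoothing statement of Theorem \ref{t:ShortTimeExt}: for every $t\in(0,T')$, the solution lies in $H^s(\TT^2)$ for all $0\leq s < 5/3$. Following the bootstrap remark immediately after that theorem, restarting the contraction argument with data in $H^s$ (which is an algebra for $s>1$) upgrades the solution to $H^r$ for arbitrarily large $r$ on a possibly shorter but still positive time interval; iterating finitely many times covers any compact subset of $(0,T')$ with $H^r$ regularity for $r$ as large as desired. Alternatively, Theorem \ref{t:analytic}, applied with initial time shifted to any $t_1\in(0,T')$ at which $\bu'(t_1)\in L^2$, shows that $\bu'$ is real-analytic in space on a strip shrinking with time, hence in particular $C^\infty$ and classical on $(0,T')$.

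The main obstacle is the bookkeeping in the first step: one must check that the threshold for $\|\bv_0\|_{L^2}$ in Theorem \ref{t:ShortTimeExt} can indeed be phrased as a lower bound $\tau_*$ on the lifetime that is uniform over the ball $\{\|\bv_0\|_{L^2}\leq M_1\}$ (this is essentially \eqref{e:ShortTimeCond}, with an inverse-power dependence), and that the gluing really yields a mild solution on the full interval $[0,T']$. The uniqueness assertion in Theorem \ref{t:ShortTimeExt}, together with the fact that any mild $L^2$ solution automatically lies in $\cX_T$ for small $T$ by the same operator bounds used to prove \eqref{e:LocalLipEst}, is what makes the gluing consistent.
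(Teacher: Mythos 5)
Your proposal is correct and follows the same continuation-by-restart skeleton as the paper: both arguments use that the local lifetime depends only (inversely) on the $L^2$ size of the data, restart from a time $t_0$ close enough to $T$ that the restarted solution overshoots $T$, and glue using uniqueness of mild solutions. The only real difference is the choice of restart engine and how classicality is obtained. The paper restarts with Theorem \ref{t:analytic} (whose time-of-existence condition $e^{t_0}\max(t_0,t_0^{1/2})<1/(2C M)$ plays exactly the role of your uniform $\tau_*$ from \eqref{e:ShortTimeCond}), so the extension is immediately analytic, hence classical, with no further work; you restart with Theorem \ref{t:ShortTimeExt} and then upgrade to classical regularity by the $H^s$ bootstrap (or, as you note, by invoking Theorem \ref{t:analytic} a posteriori), which costs an extra iteration argument but uses only Section \ref{s:shortime}. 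Two small points you should keep in mind, which affect the paper's proof equally: the gluing step needs the restriction of the original mild solution, shifted to start at $t_0$, to lie in the uniqueness class of the fixed-point argument (your remark that any mild $L^2$ solution lies in $\cX_\tau$ for small $\tau$ is the right way to close this, and is no less rigorous than the paper's one-line appeal to uniqueness); and Theorem \ref{t:analytic} as stated assumes zero-average data, so if you use it for the classicality step the restarted datum $\bu(t_1)$ should be checked (or the theorem extended) to accommodate a nonzero mean -- the bootstrap route via Theorem \ref{t:ShortTimeExt} avoids this issue entirely.
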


\begin{proof}
Since \eqref{e:blowup} holds, there exists a constant $0<M<\infty$ such that
for any $0<t<T$, $ \|\bu(t)\|_{L^2(\TT^2)}<M$. Fix an arbitrary $\delta>0$
small  and consider $0<t_0<1$ to be a time that satisfies
\[
   t_0<\frac{1}{2CM}^2.
\]
Note that $t_0$ is independent of $\delta$.
Then, by Theorem \ref{t:analytic}, there exists a mild solution
$\Tilde\bu(\tau)$ to 2DKS on $[0,t_0)$ with initial data $\bu(T-\delta)$, which
is classical on $(0,t_0)$. The following
\[
      \Bar{u}(t):=\begin{cases}
                      \Bar\bu(t)\equiv \bu(t), & 0\leq t<T-\delta, \\
                      \Bar\bu(t) \equiv \Tilde\bu(\tau), & t=(T-\delta)+\tau,
\;                       0\leq\tau<t_0,
                  \end{cases}
\]
is a mild solution to 2DKS with initial data $\bu_0$ and, hence, by uniqueness
of mild solutions, $\Bar\bu$ coincides with $\bu$ on $[0,T-\delta)$. It is now
enough to take $t_0>\delta$ to conclude.
\end{proof}

%

%\bibliographystyle{plain}

%\bibliography{2DKS}

\end{document}